\newcommand{\thetamin}{\theta^{(0)}}
\newcommand{\halflap}{\left(-\frac{\mathrm{d}^2}{\mathrm{d}x^2}\right)^{1/2}}
\newcommand{\brackets}[1]{\left( #1 \right)}
\newcommand{\R}{\mathbb{R}}
\newcommand{\norm}[1]{\left\| #1  \right\|} 
\renewcommand{\d}[1]{\,  \mathrm{d} #1} 
\newcommand{\abs}[1]{\left| #1 \right|} 
\renewcommand{\v}[1]{\ensuremath{\mathbf{#1}}} 
\newcommand{\half}[0]{\frac{1}{2}}
\newcommand{\uv}[1]{\ensuremath{\hat{\mathbf{#1}}}}
\newcommand{\beq}[0]{\begin{equation}}
\newcommand{\eeq}[0]{\end{equation}}
\newcommand{\pd}[2]{\frac{\partial #1}{\partial #2}} 
\newcommand{\weak}{\rightharpoonup}
\newcommand{\grad}[0]{\nabla}
\renewcommand{\div}[0]{\nabla\cdot}
\let\csname equation*\endcsname\relax
\let\csname endequation*\endcsname\relax
\newtheorem{thm}{Theorem}
\newtheorem{lemma}{Lemma}
\newtheorem{remark}{Remark}
\begin{document}

\title[One-dimensional domain walls]
      {One-dimensional domain walls in thin ferromagnetic films with fourfold anisotropy}

\author{Ross G. Lund and Cyrill B. Muratov}
\address{Department of Mathematical Sciences, NJIT, University Heights, Newark, NJ 07102, USA}
\eads{\mailto{lund@njit.edu}, \mailto{muratov@njit.edu}}

\begin{abstract} 
  We study the properties of domain walls and domain patterns in
  ultrathin epitaxial magnetic films with two orthogonal in-plane
  easy axes, which we call fourfold materials. In these materials, the
  magnetization vector is constrained to lie entirely in the film
  plane and has four preferred directions dictated by the easy
  axes. We prove the existence of $90^\circ$ and $180^\circ$ domain
  walls in these materials as minimizers of a nonlocal one-dimensional
  energy functional.  Further, we investigate numerically the role of
  the considered domain wall solutions for pattern formation in a
  rectangular sample.
\end{abstract}

\noindent Mathematics Subject Classification: 78A30, 35Q60, 82D40 \\
\vspace{2pc}
\noindent \hspace{-2.5mm} \emph{Keywords}: magnetic domains, thin
films, non-local variational problems

\submitto{Nonlinearity}
\section{Introduction}

Thin-film ferromagnetic materials have played a central role in
information storage technologies for many years \cite{Hubert, Moser,
  eleftheriou10}. In this context, much attention has been
devoted to studying these materials by both the physics and
mathematics communities \cite{Hubert,
  DeSimoneKohnMullerOtto2}. Magnetic storage media make use of
\emph{magnetic domains}---regions of uniform magnetization separated
by thin transition layers called \emph{domain walls}---to represent
bits of information.  Typically, thin films possess one
distinguished in-plane direction along which the magnetization prefers
to align; this direction is referred to as an easy axis, and materials
possessing a single easy axis as \emph{uniaxial}. In such a material,
this results in two distinct optimal domain orientations, which may be
used to store binary information. More recently, more preference has
been given to perpendicular materials --- magnetic storage materials
in which the easy axis is in the out-of-plane direction \cite{Moser,
  PMR}. Nevertheless, thin film materials with in-plane anisotropy
continue to be important for many applications, such as
magnetoresistive random access memory \cite{Dennis, Akerman, Tehrani,
  Zhu08, Roadmap} and spintronics \cite{Bader}.

In this article, we study one-dimensional domain walls in thin
ferromagnetic films in which the magnetization is strongly penalized
from pointing out of the film plane, with two orthogonal in-plane easy
axes (and thus four optimal magnetization directions). We refer to
these materials as \emph{ultrathin fourfold films}. Such behaviour is,
for example, experimentally realized in very thin (3-19 monolayers
thick) films of epitaxial cobalt and gives rise to unusual magnetic
domain morphologies in which the magnetization vector has a tendency
to rotate by integer multiplies of $90^\circ$ in the film plane
\cite{Oepen1, Oepen2}.

In order to understand the magnetization behaviour of thin-film
materials from a theoretical point of view, one would like to study
the formation of domain patterns, and  the structure of the domain
walls which connect  them. We start from the Landau--Lifshitz--Gilbert
(LLG) equation  for the dynamics of the magnetization vector
$\v{M}(\v{x},t)$, which is  of fixed length $|\v{M}(\v{x},t)|=M_s$ and
defined on a spatial  domain $\Omega \subset \R^3$ representing the
ferromagnetic  body under study. The LLG equation for $\v{M}$ reads
\beq
\pd{\v{M}}{t}= -\frac{\gamma}{1+\alpha^2} \brackets{\v{M}\times\v{H} +
  \alpha  \v{M}\times \v{M}\times\v{H}}, \quad \uv{n}\cdot \grad
\v{M}|_{\partial  \Omega} = 0,
\label{LLG}
\eeq
where $\gamma$ is the gyromagnetic ratio, $\alpha$ is the
dimensionless Gilbert  damping parameter, and $\v{H}$ is the effective
magnetic field. This field is obtained via 
$\v{H} = - {\delta E}/{\delta \v{M}}$,
where ${E}(\v{M})$ is the micromagnetic energy functional:
\begin{multline} {E}(\v{M}) = \half \int_\Omega
  \brackets{\frac{A}{M_s^2} \abs{\grad\v{M}}^2 + \frac{K}{M_s^2}
    \Phi(\v{M}) - 2 \v{H}_a\cdot \v{M} }\d^3\v{r} \\+\half \int_{\R^3}
  \int_{\R^3}
  \frac{\div\v{M}(\v{r})\div\v{M}(\v{r}')}{\abs{\v{r}-\v{r}'}}\d^3\v{r}\d^3\v{r}'.
\label{3DEnergy}
\end{multline}
Here, $A$ is the exchange constant; $K$ is a crystalline
anisotropy constant, with $\Phi(\v{M})$ a scalar function describing
the anisotropy; and $\v{H}_a \in \R^3$ is an external applied magnetic
field. The terms in the energy may be understood as follows. The first
term is the exchange energy, which penalizes spatial variations of
$\v{M}$; the second is the anisotropy energy, which describes the
preferred directions for $\v{M}$ within a material; the third is the
Zeeman energy, which prefers $\v{M}$ to align with the external
field; and the fourth (in which $\v{M}$ is extended by zero outside of
$\Omega$ and the derivatives are understood in the distributional
sense) is the nonlocal magnetostatic energy, which prefers to minimize
the distributional divergence of $\v{M}$. See e.g. \cite{Hubert} for
further details of the micromagnetic model.

We can consider observable static domain patterns as stationary (i.e.
with $\pd{\v{M}}{t}=0$) solutions of \eqref{LLG}, and note that, in
this case, equation \eqref{LLG} coincides with the Euler--Lagrange
equation for $E$ incorporating the pointwise constraint
$\abs{\v{M}} = M_s$. This will enable us to employ variational
techniques to characterize solutions.

We consider a reduction of the micromagnetic theory appropriate for
very thin films. Many such reductions have been presented before
\cite{GioiaJames, DeSimoneKohnMullerOtto, MoserR, Kurzke,
  KohnSlastikov}, corresponding to a variety of regimes of the
physical parameters in the energy \eqref{3DEnergy}. In order to
understand the parameter regime we study here, we introduce the
following quantities: \beq \ell = \brackets{\frac{A}{4\pi
    M_s^2}}^{1/2}, \quad L=\brackets{\frac{A}{K}}^{1/2}, \quad Q =
\brackets{\frac{\ell}{L}}^{2},
\label{Parameters}
\eeq respectively called the exchange length, Bloch wall width, and
quality factor. In extended films, we may take the spatial domain
$\Omega = \R^2 \times (0,d)$, where $d$ is the film thickness. The
physical regime we consider is then characterized by the scalings
$d \lesssim \ell \lesssim L$, with $Ld/\ell^2 \sim 1$. This regime
(ultrathin, moderately soft film) is relevant for a variety of
materials \cite{Heinrich}.  In this regime, one may introduce the
dimensionless \emph{thin-film parameter} \beq \nu =\frac{4\pi M_s^2
  d}{KL} = \frac{Ld}{\ell^2} =\frac{d}{\ell \sqrt{Q}}, \eeq which
characterizes the strength of the magnetostatic interaction relative
to both exchange and anisotropy.

One may then formally derive a reduced LLG equation from \eqref{LLG}
by considering the limit $Q\to 0$ and $d\to 0$ together with
$\nu = O(1)$ fixed \cite{GarciaCerveraE, Murosi}. Letting
$\v{m} = \v{M}/M_s$ and assuming $\v{m}=(m_1, m_2, 0)$, i.e., that
  $\v{m}$ lies entirely within the film plane, after suitable
rescalings one finds the effective overdamped equation

\beq \pd{\v{m}}{t} = - \v{m}\times \v{m}\times \v{h}.\label{2DLLG}
\eeq Here, $\v{h}$ is the effective field obtained now as
$\v{h}=-\delta \mathcal{E}/\delta \v{m}$, where $\mathcal{E}$ is
the reduced thin-film energy \beq \mathcal{E}(\v{m}) = \frac12
\int_{\R^2} \brackets{|\grad \v{m}|^2 + \Phi(\v{m})} \d^2 \v{r} +
\frac{\nu}{8\pi}\int_{\R^2}\int_{\R^2}
\frac{\div\v{m}(\v{r})\div\v{m}(\v{r}')}{|\v{r}-\v{r}'|}\d^2 \v{r}
\d^2 \v{r}',
\label{2DEnergy}
\eeq and now $\v{m}:\R^2\to \mathbb S^1$ is the in-plane magnetization
direction field. In what follows, we consider the case of \beq
\Phi(\v{m}) = (\v{m}\cdot \v{e}_1)^2(\v{m}\cdot\v{e}_2)^2, \eeq
corresponding to fourfold anisotropy. This type of magnetocrystalline
anisotropy is very common for ultrathin epitaxial films
\cite{Heinrich}. Ultrathin films with this type of anisotropy has been
proposed for applications to multi-level magnetoresistive random
access memories \cite{moieee,mov15} and could be of interest to domain
wall based devices \cite{Roadmap, Logic}.

In uniaxial thin films, the behaviour of $180^\circ$ domain walls has
been extensively studied. For simple one-dimensional profiles
connecting the two optimal directions, there are two possibilities:
the \emph{Bloch wall}, in which the magnetization transitions between
the optimal domains by rotating out of the film plane, and the
\emph{N\'eel wall}, in which the rotation occurs entirely within the
plane. Which wall type is energetically preferred depends essentially
on how severe the penalty for rotating out of plane is, which in turn
depends on the film thickness. In the ultrathin regime we consider in
this article, this penalty is strong enough to simply forbid any out
of plane component of $\v{m}$, so that the N\'eel wall profile is the
only choice.

N\'eel walls have been studied for many years, with a degree of
controversy (see e.g. \cite{Aharoni, Hubert}). More recent
micromagnetic studies have led to a good present understanding of the
Neel wall's internal structure \cite{Hubert, DeSimoneKohnMullerOtto2,
  Garcia1, Melcher, Garcia2, CapellaOttoMelcher, Chemur, MurYan}, the
main features of which (sharp inner core with slowly decaying tails)
have been verified experimentally \cite{BergerOepen, WongLaughlin,
  Jubert}.

Rigorous mathematical studies of the N\'eel walls began with the work
of \textsc{Garcia-Cervera}, who studied, both analytically and
numerically, the one-dimensional variational problem obtained from the
full micromagnetic energy by restricting to profiles which depend only
on one spatial variable \cite{Garcia1,Garcia2}. The same functional
was studied by \textsc{Melcher}, who restricted the admissible
magnetization configurations to those constrained to the film plane,
and established symmetry and monotonicity of minimizers connecting the
two optimal directions \cite{Melcher}. Uniqueness of the N\'eel wall
profile and its linearized stability with respect to one-dimensional
perturbations was treated by \textsc{Capella, Otto and Melcher}
\cite{CapellaOttoMelcher}. Stability of geometrically constrained
N\'eel walls with respect to large two-dimensional perturbations has
been demonstrated asymptotically in \cite{DeSimoneKnupferOtto}.
Most recently a comprehensive study of N\'eel walls under the
influence of applied magnetic fields was undertaken by
\textsc{Chermisi and Muratov} \cite{Chemur}. They proved existence,
uniqueness, strict monotonicity and smoothness of the wall profile
along with estimates for its asymptotic decay.

To summarize, in ultrathin uniaxial films the magnetization is
effectively constrained to lie completely in the film plane, and one
encounters $180^\circ$ N\'eel walls as the optimal transition layer
profiles connecting the two uniform states. These are now well
understood. Beyond that, it is possible to observe stable
\emph{winding domain walls}, in which the magnetization makes a number
of full $360^\circ$ rotations (most often just one, though more are
possible) \cite{SmithHarte, Wade1, Wade2}. This type of domain walls
has received recent theoretical attention in \cite{Murosi2,
  MuratovKnupfer}. A reproducible way to inject 360$^\circ$ walls
  into ferromagnetic nanowires and successful manipulation of such
  domain walls were recently demonstrated experimentally in
  \cite{ZhangRoss, JingRoss}.

In fourfold films with in-plane magnetizations, we can make the
following analogies with the uniaxial case.  In fourfold materials,
the $90^\circ$-walls are expected to exist as optimal profiles
connecting two adjacent minima of the potential $\Phi$ (e.g.
$+\v{e}_1$ and $+\v{e}_2$). This is analogous to the $180^\circ$
N\'eel walls in uniaxial materials. For a $180^\circ$-wall in a
fourfold material, the magnetization has to connect two nonadjacent
minima of $\Phi$ while passing directly through a third somewhere in
between (i.e. connect $+\v{e}_1$ and $-\v{e}_1$, while passing through
$+\v{e}_2$). Moreover, this should occur without the wall simply
splitting into two separate $90^\circ$-walls. This is analogous to the
$360^\circ$-walls in uniaxial materials.

In this article we extend the methods contained in previous work
concerning $180^\circ$ and $360^\circ$ domain walls in uniaxial
materials to the setting of fourfold materials, and prove existence
results for both $90^\circ$ and $180^\circ$ walls in these materials.
These walls, despite some apparent analogies with those found in
uniaxial films, have not been previously investigated
theoretically.
 
\subsection{Reduced model for one-dimensional domain walls}

Since stationary solutions of \eqref{2DLLG} coincide with critical
points of \eqref{2DEnergy}, in order to study stationary
one-dimensional domain wall profiles, we now seek to derive a 1D
variational problem from \eqref{2DEnergy} which is appropriate to
capture such profiles via minimization.

In what follows we explicitly restrict to stationary profiles,
$\v{m}(x_1, x_2,t)=\v{m}(x_1, x_2)$. It is convenient to
introduce the in-plane magnetization angle $\theta:\R^2 \to \R$ via
\beq \v{m} = - \v{e}_1\sin\theta + \v{e}_2 \cos\theta. \eeq We
now assume a one-dimensional profile
$\theta(x_1, x_2)=\theta(\xi)$ varying only along the direction
$\v{e}_\xi = \v{e}_1 \cos\beta + \v{e}_2 \sin \beta $; we refer to the
angle $\beta$ as the \emph{wall orientation}.
With these assumptions, the LLG equation \eqref{2DLLG} for a
stationary 1D profile  $\theta(x)$ reduces to
\beq
0 = -\theta_{xx} +
\frac14\sin4\theta+\frac{\nu}{2}\cos(\theta-\beta) \halflap
\sin(\theta-\beta),
\label{1DELGeneral}
\eeq where $\halflap$ is the negative 1D half-Laplacian (a linear
operator from $H^1(\R)$, modulo additive constants, to $L^2(\R)$
whose Fourier symbol is $|k|$).  Equation \eqref{1DELGeneral} is also
the Euler--Lagrange equation corresponding to the energy \beq
\mathcal{E}_\beta(\theta) = \frac12 \int_\R \brackets{|\theta'|^2
  +\frac14 \sin^2 2\theta} \d x +
\frac{\nu}{4}\norm{\sin(\theta-\beta)}_{\dot{H}^{1/2}(\R)}^2,
\label{1DEnergyGeneral}
\eeq where we introduced the homogeneous $H^{1/2}(\R)$
(semi-)norm \cite{LiebLoss}: \beq
\norm{u}_{\dot{H}^{1/2}(\R)}^2 =\int_\R u \halflap u \d x =
\frac{1}{2\pi}\int_\R\int_\R \frac{(u(x)-u(y))^2}{(x-y)^2} \d x \d y.
\eeq It is not too difficult to see that this energy corresponds to
the energy $\eqref{2DEnergy}$ of a 1D profile per unit width in the
transverse direction.

This model forms the basis of the rest of this article. It is
necessary to specialize it further to individually examine the two
types of wall we study: $90^\circ$ and $180^\circ$ walls.
To state our main results, we need to introduce the following
  general admissible class of functions for a given $\alpha \in
  \mathbb R$:
$$
\mathcal{A}_{\alpha} = \{\theta \in
H^1_{\text{\scriptsize{loc}}}(\R) : \theta - \eta_{\alpha} \in
H^1(\R)\},
$$
where $\eta_{\alpha}\in C^\infty(\R)$ is a fixed function which
  satisfies
$$
\eta_{\alpha}(x)  =  \begin{cases} \alpha & \text{ for }
  x\in (-\infty,-1), \\ 0 & \text{ for } x\in (1,+\infty).\end{cases}
$$
It is easy to see that the definition of the admissible class
$\mathcal A_\alpha$ does not depend on the specific choice of
$\eta_\alpha$ \cite{Chemur}. Also, by Morrey's theorem we may
  always assume that if $\theta \in \mathcal A_\alpha$, then
  $\theta \in C(\R) \cap L^\infty(\R)$ and that
  $\lim_{x \to +\infty} \theta(x) = 0$ and
  $\lim_{x \to -\infty} \theta(x) = \alpha$. 

In the following, we will look for minimizers of the
one-dimensional domain wall energy $\mathcal E_\beta$ in the
admissible classes $\mathcal A_\alpha$ with $\alpha = \pi/2$ and
$\alpha = \pi$ to study $90^\circ$ and $180^\circ$ walls,
respectively.

\section{Main Results}

We are now in a position to state the main results of this work.  The
first result below concerns $90^\circ$-walls, and provides existence
of these as energy minimizing configurations.  These profiles only
exist for a particular orientation $\beta = -\pi/4$ (modulo $\pi/2$
rotations).  Furthermore, we are able to extract further information
on the profiles including uniqueness, smoothness, and strict
monotonicity.

\begin{thm}[$90^\circ$-walls: existence, uniqueness, regularity and strict monotonicity]\label{90Exist}
  For $\beta=-\pi/4$ and each $\nu>0$, there exists a minimizer of the
  energy $\mathcal{E}_\beta(\theta)$ over the admissible class
  $ \mathcal{A}_{\pi/2}$. The minimizer is unique (up to
  translations), strictly decreasing with range equal to
  $(0,\pi/2)$, and is a smooth solution of \eqref{1DELGeneral}
  satisfying the limit conditions \begin{equation}
  \label{thbcpi2}
  \lim_{x\to+\infty}\theta(x) = 0, \quad
  \lim_{x\to-\infty}\theta(x) = \pi/2.  
\end{equation}
Moreover, if $\thetamin:\R\to (0,\pi/2)$ is the minimizer of
$\mathcal{E}_{-\pi/4}(\theta)$ over $\mathcal{A}_{\pi/2}$
satisfying $\thetamin(0)=\pi/4$, then
$\thetamin(x)=\pi/2 - \thetamin(-x)$.
\label{90WallsResult}
\end{thm}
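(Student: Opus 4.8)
The plan is to obtain the minimizer by the direct method and then to read off the remaining properties from the Euler--Lagrange equation \eqref{1DELGeneral}, from a rearrangement argument, and from maximum-principle arguments, following closely the scheme developed for N\'eel walls in \cite{Melcher,CapellaOttoMelcher,Chemur}. First, the orientation $\beta=-\pi/4$ is forced: for $\theta\in\mathcal A_{\pi/2}$ one has $\sin(\theta-\beta)\to\cos\beta$ as $x\to-\infty$ and $\sin(\theta-\beta)\to-\sin\beta$ as $x\to+\infty$, so the Gagliardo double integral defining $\norm{\sin(\theta-\beta)}_{\dot H^{1/2}(\R)}^2$ diverges unless $\cos\beta=-\sin\beta$, i.e. $\beta=-\pi/4$ (modulo $\pi$); and for $\beta=-\pi/4$ the function $\sin(\theta-\beta)-\tfrac1{\sqrt2}$ lies in $H^1(\R)\subset\dot H^{1/2}(\R)$, so $\mathcal E_{-\pi/4}\ge 0$ is finite on $\mathcal A_{\pi/2}$. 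Taking a minimizing sequence $\theta_n$ and translating so that $\theta_n(0)=\pi/4$, the energy bound controls $\norm{\theta_n-\eta_{\pi/2}}_{H^1(\R)}$; passing to a subsequence, $\theta_n\rightharpoonup\theta$ in $H^1_{\mathrm{loc}}(\R)$ (hence locally uniformly) and $\theta_n-\eta_{\pi/2}\rightharpoonup\theta-\eta_{\pi/2}$ in $H^1(\R)$, so $\theta\in\mathcal A_{\pi/2}$ with $\theta(0)=\pi/4$. The Dirichlet term and the $\dot H^{1/2}$-seminorm are weakly lower semicontinuous and the anisotropy term is handled by Fatou's lemma (it is nonnegative and $\theta_n\to\theta$ a.e.), so $\theta$ is a minimizer.

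\emph{Range, equation, regularity.} Replacing $\theta$ by $\widehat\theta=\max\{0,\min\{\theta,\pi/2\}\}\in\mathcal A_{\pi/2}$ does not increase the Dirichlet term (truncation) nor the anisotropy term ($\sin^2 2\theta$ vanishes at $0$ and $\pi/2$); and since $\sin(t+\pi/4)\ge\tfrac1{\sqrt2}$ on $[0,\pi/2]$ with equality only at the endpoints, while $\sin(t+\pi/4)<\tfrac1{\sqrt2}$ for $t\notin[0,\pi/2]$, on the set $V=\{\theta\notin[0,\pi/2]\}$ one has $\sin(\widehat\theta+\pi/4)\equiv\tfrac1{\sqrt2}$, $\sin(\theta+\pi/4)<\tfrac1{\sqrt2}$ on $V$ and $\sin(\theta+\pi/4)\ge\tfrac1{\sqrt2}$ on $V^c$; a termwise comparison in the Gagliardo integral then gives $\norm{\sin(\widehat\theta+\pi/4)}_{\dot H^{1/2}}<\norm{\sin(\theta+\pi/4)}_{\dot H^{1/2}}$ whenever $|V|>0$. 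Hence every minimizer satisfies $0\le\theta\le\pi/2$. The first variation of $\mathcal E_{-\pi/4}$ (well defined since $\sin4\theta\in L^2$) shows $\theta$ solves \eqref{1DELGeneral} weakly; as $\sin(\theta-\beta)-\tfrac1{\sqrt2}\in H^1$ we get $\halflap\sin(\theta-\beta)\in L^2$, and a standard elliptic bootstrap (using the smoothing of $\halflap$ on H\"older spaces) gives $\theta\in C^\infty$, as in \cite{Chemur}.

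\emph{Monotonicity and strictness.} Let $\theta^{\#}$ be the monotone (transition-layer) rearrangement of $\theta$ among $[0,\pi/2]$-valued functions with limits $\pi/2$ at $-\infty$ and $0$ at $+\infty$. The Dirichlet energy does not increase (P\'olya--Szeg\H{o}), the anisotropy term is unchanged (equimeasurability, $\tfrac18\sin^2 2\theta$ vanishing at both endpoints of the range), and the $\dot H^{1/2}$-seminorm of $\sin(\theta+\pi/4)$ does not increase under this rearrangement of $\theta$; equality in all three forces $\theta=\theta^{\#}$ up to translation, so every minimizer is non-increasing. \textbf{This last point is the crux:} since $t\mapsto\sin(t+\pi/4)$ is \emph{not} monotone on $[0,\pi/2]$ (it rises on $[0,\pi/4]$ and falls on $[\pi/4,\pi/2]$), one cannot simply apply a Riesz inequality to the rearrangement of $\sin(\theta+\pi/4)$; instead one argues, as in the N\'eel wall case, through a rearrangement estimate for the kernel $(x-y)^{-2}$ applied to the super- and sub-level sets of $\theta$. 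For strictness and the range: if $\theta(x_0)=\pi/2$ at a finite $x_0$ then $\theta\equiv\pi/2$ on $(-\infty,x_0]$ and \eqref{1DELGeneral} forces $\halflap\sin(\theta-\beta)\equiv0$ there, which is impossible since $\sin(\theta-\beta)\ge\tfrac1{\sqrt2}$ everywhere with strict inequality just to the right of $x_0$, whence $\halflap\sin(\theta-\beta)<0$ on $(-\infty,x_0)$; the bound $\theta>0$ is symmetric. Interior flat parts are excluded from the equation (for a flat value $c\ne\pi/4$ it forces $\halflap\sin(\theta-\beta)$ to be locally constant, incompatible with its Cauchy-integral representation; for $c=\pi/4$ one uses $\halflap\sin(\theta-\beta)>0$ together with a second-variation computation, or real-analyticity of solutions of \eqref{1DELGeneral}). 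Equivalently, once monotonicity is known one differentiates \eqref{1DELGeneral} and applies a strong maximum principle to the linear nonlocal equation satisfied by $\theta'$. This yields the range $(0,\pi/2)$ and $\theta'<0$.

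\emph{Uniqueness and symmetry.} Uniqueness up to translation follows from a sliding/comparison argument for \eqref{1DELGeneral}, using strict monotonicity, the range $(0,\pi/2)$ and the comparison principle for $\halflap$, exactly as for N\'eel walls in \cite{CapellaOttoMelcher,Chemur}. Finally, $R\theta(x):=\pi/2-\theta(-x)$ maps $\mathcal A_{\pi/2}$ into itself and preserves $\mathcal E_{-\pi/4}$: the Dirichlet term and the $\dot H^{1/2}$-seminorm are invariant under $x\mapsto -x$, $\sin^2 2(\pi/2-\theta)=\sin^2 2\theta$, and $\sin((\pi/2-\theta)+\pi/4)=\sin(\theta+\pi/4)$. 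Hence $R\thetamin$ is again a minimizer, so by uniqueness $R\thetamin=\thetamin(\cdot-a)$ for some $a$; since $R\thetamin(0)=\pi/2-\thetamin(0)=\pi/4=\thetamin(0)$ and, by strict monotonicity, $\thetamin$ takes the value $\pi/4$ only at $0$, we conclude $a=0$, i.e. $\thetamin(x)=\pi/2-\thetamin(-x)$. The main obstacle is the monotonicity step above; the uniqueness argument is the second most delicate point.
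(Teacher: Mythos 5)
Your overall scaffold (direct method, rearrangement, bootstrap regularity, symmetry via uniqueness) matches the paper's, but at the two places you yourself flag as the crux your argument has real gaps, and the paper's resolution of both rests on one idea you are missing: the energy $\mathcal E_{-\pi/4}$ is \emph{exactly invariant} under the fold $\theta\mapsto\pi/2-\theta$, because $\sin\bigl((\pi/2-\theta)+\pi/4\bigr)=\sin(\theta+\pi/4)$ and $\sin^2 2(\pi/2-\theta)=\sin^2 2\theta$. The paper therefore replaces $\theta$ (after restricting its range to $[0,\pi/2]$) by the folded variable $\rho=\min(\theta,\pi/2-\theta)\in[0,\pi/4]$ with $\mathcal E_{-\pi/4}(\rho)=\mathcal E_{-\pi/4}(\theta)$. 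On $[0,\pi/4]$ the map $\rho\mapsto\sin(\rho+\pi/4)-1/\sqrt2$ \emph{is} monotone and vanishes at $\rho=0$, so the symmetric decreasing rearrangement $\rho^*$ satisfies $[\sin(\rho+\pi/4)-\sin(\pi/4)]^*=\sin(\rho^*+\pi/4)-\sin(\pi/4)$ and the Riesz/P\'olya--Szeg\H o inequalities apply directly; unfolding $\rho^*$ produces the monotone, symmetric competitor. Your proposed substitute (``a rearrangement estimate for the kernel applied to super- and sub-level sets of $\theta$'') is not an argument: the monotone rearrangement of $\theta$ does not induce a symmetric decreasing rearrangement of $u=\sin(\theta+\pi/4)$ precisely because of the non-monotonicity you point out, and a merely unimodal, equimeasurable rearrangement of $u$ is not guaranteed to decrease the $\dot H^{1/2}$ seminorm. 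The same folding also supplies the coercivity you assert without proof: your claim that the energy bound alone controls $\norm{\theta_n-\eta_{\pi/2}}_{H^1(\R)}$ after fixing $\theta_n(0)=\pi/4$ is false for a general minimizing sequence (it can oscillate between neighborhoods of $0$ and $\pi/2$ arbitrarily far out); the paper first normalizes the sequence to be $[0,\pi/2]$-valued, folded and symmetrically rearranged, and then proves $\mathcal E_{-\pi/4}(\rho)\ge\frac14\norm{\rho}_{H^1}^2$ using $\cos\rho\ge1/\sqrt2$ and $\sin\rho\ge\rho/\sqrt2$ on $[0,\pi/4]$.

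Two smaller points. Your range-restriction argument is wrong as stated: $\sin(t+\pi/4)<1/\sqrt2$ for $t\notin[0,\pi/2]$ fails for $t$ in other periods (e.g.\ $t\in(2\pi,2\pi+\pi/2)$), so the termwise Gagliardo comparison breaks down for admissible profiles with extra winding; the paper's Lemma 1 instead uses a periodic folding map together with $\norm{u}_{\dot H^{1/2}}^2\ge\norm{|u|}_{\dot H^{1/2}}^2$ and $|\sin(\theta+\pi/4)|=\sin(\theta^T+\pi/4)$. For uniqueness, ``sliding/comparison'' is only a gesture; the paper's actual mechanism is convexity: writing the energy in terms of $u=\sin(\theta+\pi/4)$, averaging $u^{(1)}$ and $u^{(2)}$, using $(\tilde\theta_x)^2\le\frac12\bigl((\theta^{(1)}_x)^2+(\theta^{(2)}_x)^2\bigr)$ for the Dirichlet part and strict convexity of the anisotropy and stray-field terms on the range $u\in[1/\sqrt2,1]$ (this is where the restriction to $90^\circ$ walls matters --- the same step fails for $180^\circ$ walls, which is why the paper leaves their uniqueness open). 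The strict-monotonicity step in the paper is also more concrete than yours: it differentiates the Euler--Lagrange equation at a putative critical point $x_*$ and shows $\theta'''(x_*)>0$ from an explicit integral representation of $\halflap\cos(\theta+\pi/4)$. In short: right blueprint, but the folding symmetry that makes the rearrangement and coercivity work, and the convexity mechanism behind uniqueness, are both missing.
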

\noindent The choice of the admissible class $\mathcal A_{\pi/2}$
serves to enforce the asymptotic behavior in \eqref{thbcpi2}. We note
that for other wall orientations $\beta$ the wall would carry a net
line ``charge'' and, hence, the last term in the one-dimensional wall
energy \eqref{1DEnergyGeneral} would always be infinite on
  $\mathcal A_{\pi/2}$.

The next result is similar to the first, but concerning
$180^\circ$-walls. Again the orientation of the walls is restricted,
this time to $\beta = 0$ (modulo $\pi/2$ rotations). Many of the
properties of $90^\circ$ walls follow here, though uniqueness of the
profile is not presently clear.

\begin{thm}[$180^\circ$-walls: existence, regularity and strict
  monotonicity]\label{180Exist}
  For $\beta=0$ and each $\nu>0$, there exists a minimizer of the
  energy $\mathcal{E}_\beta(\theta)$ over the admissible class
  $ \mathcal{A}_{\pi}$. The minimizer is strictly decreasing with
  range equal to $(0,\pi)$, and is a smooth solution of
  \eqref{1DELGeneral} satisfying the limit conditions
  \begin{equation}
  \label{thbcpi}
  \lim_{x\to+\infty}\theta(x) = 0, \quad
  \lim_{x\to-\infty}\theta(x) = \pi.
\end{equation}
Moreover, if $\thetamin:\R\to (0,\pi)$ is the minimizer of
$\mathcal{E}_0(\theta)$ over $\mathcal{A}_{\pi}$ satisfying
$\thetamin(0)=\pi/2$, then $\thetamin(x)=\pi - \thetamin(-x)$.
\label{180WallsResult}
\end{thm}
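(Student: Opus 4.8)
The plan is to follow the direct method of the calculus of variations, mirroring the structure used for the $90^\circ$-wall result but paying close attention to the fact that the target transition now passes through the intermediate minimum $\theta = \pi/2$ of the anisotropy potential. First I would show that $\mathcal E_0$ is bounded below on $\mathcal A_\pi$ (it is manifestly nonnegative) and is coercive in the sense that a minimizing sequence $\{\theta_n\}$ has $\|\theta_n'\|_{L^2(\R)}$ uniformly bounded, as well as $\|\sin\theta_n\|_{\dot H^{1/2}(\R)}$ uniformly bounded. Combined with the fixed boundary data encoded in $\mathcal A_\pi$, this yields, after subtracting $\eta_\pi$, a sequence bounded in $H^1(\R)$; passing to a subsequence we get $\theta_n - \eta_\pi \weak w$ weakly in $H^1(\R)$ and $\theta_n \to \theta_\infty := w + \eta_\pi$ strongly in $C_{\mathrm{loc}}(\R)$ and pointwise. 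Weak lower semicontinuity of the Dirichlet term is standard; the anisotropy term $\frac18\int \sin^2 2\theta$ passes to the limit by Fatou; and lower semicontinuity of the nonlocal term $\frac\nu4\|\sin\theta\|_{\dot H^{1/2}}^2$ follows since $\sin\theta_n \to \sin\theta_\infty$ in $L^2_{\mathrm{loc}}$ and the $\dot H^{1/2}$ seminorm is weakly lower semicontinuous (or via the double-integral representation and Fatou). Hence $\theta_\infty$ is a minimizer. A separate small argument, using that $w \in H^1(\R)$ forces $w(\pm\infty) = 0$, confirms $\theta_\infty \in \mathcal A_\pi$ with the correct limits \eqref{thbcpi}.

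Next I would establish the qualitative properties. Smoothness and the fact that $\theta_\infty$ solves \eqref{1DELGeneral} classically follow from elliptic bootstrapping: the minimizer is a weak solution, the nonlocal term $\cos\theta\,\halflap\sin\theta$ lies in $L^2_{\mathrm{loc}}$, so $\theta'' \in L^2_{\mathrm{loc}}$, hence $\theta \in C^{1,\gamma}$, and one iterates using regularity of $\halflap$ acting on $\sin\theta$ to reach $C^\infty$. For monotonicity and the range statement I would use a rearrangement/reflection argument of the type employed by Melcher and by Chermisi--Muratov: first show $0 \le \theta_\infty \le \pi$ by a truncation argument (replacing $\theta$ by $\min(\max(\theta,0),\pi)$ does not increase any of the three energy terms — crucially $\sin^2 2\theta$ and the $\dot H^{1/2}$-seminorm of $\sin\theta$ both do not increase under this truncation, since $t \mapsto \sin t$ composed with this truncation is a composition with a $1$-Lipschitz map fixing the relevant values); then show strict monotonicity by a sliding/moving-plane argument or by the observation that a non-monotone minimizer could be replaced by its monotone rearrangement with strictly smaller energy. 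Strictness at interior points, and the exclusion of the endpoints $0,\pi$ being attained at finite $x$, follow from the strong maximum principle applied to the (nonlocal) Euler--Lagrange equation, exactly as in \cite{Chemur}. Finally, the symmetry $\thetamin(x) = \pi - \thetamin(-x)$ follows because the map $\theta(x) \mapsto \pi - \theta(-x)$ preserves $\mathcal A_\pi$ and leaves $\mathcal E_0$ invariant (using $\sin^2 2(\pi - \theta) = \sin^2 2\theta$ and invariance of the $\dot H^{1/2}$-seminorm under the reflection $x \mapsto -x$ and the sign flip $\sin(\pi-\theta) = \sin\theta$), so the unique (after normalizing $\thetamin(0) = \pi/2$ using strict monotonicity and translation) minimizer must be a fixed point of this symmetry.

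The key point that must be handled with care — and the analogue of the line-charge remark following Theorem \ref{90WallsResult} — is \emph{why $\beta = 0$} and why the nonlocal term is finite on $\mathcal A_\pi$. For $\theta \in \mathcal A_\pi$ with $\beta = 0$ we have $\sin(\theta - \beta) = \sin\theta \to 0$ at both $\pm\infty$ (since $\sin 0 = \sin\pi = 0$), so $\sin\theta \in \dot H^{1/2}(\R)$ is consistent with $\sin\theta \to 0$ at infinity and the seminorm is finite; for any other $\beta \notin \frac\pi2\mathbb Z$, $\sin(\theta-\beta)$ would tend to distinct nonzero limits at $\pm\infty$ (a net line charge), forcing the $\dot H^{1/2}$-seminorm to be infinite on the entire class, so no finite-energy $180^\circ$-wall of that orientation exists.

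The main obstacle I anticipate is the coercivity/compactness step combined with the lack of a coercive contribution from the nonlocal term: unlike in uniaxial N\'eel walls, here the transition must traverse the \emph{third} minimum $\theta = \pi/2$, and a priori a minimizing sequence could try to split into two well-separated $90^\circ$-walls, sending mass to infinity. Ruling this out — i.e. showing the infimum over $\mathcal A_\pi$ is \emph{strictly} below twice the $90^\circ$-wall energy, so that a single connected $180^\circ$-wall is genuinely preferred and no dichotomy occurs in the concentration-compactness sense — is the delicate part; I expect it to follow from a strict-subadditivity estimate exploiting that the long-range $\dot H^{1/2}$ interaction between two far-apart $90^\circ$-walls of the \emph{same} rotation sense contributes a strictly negative correction of order (distance)$^{-1}$, making the split configuration strictly suboptimal, together with the fact that $\sin\theta$ does not vanish between the two putative sub-walls. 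This is also presumably the reason uniqueness is left open: the energy landscape near the $180^\circ$-wall is flatter than in the $90^\circ$ case.
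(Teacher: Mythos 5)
Your overall architecture (direct method, folding and rearrangement, bootstrapping for regularity, reflection for the symmetry) matches the paper's, and you correctly single out the real difficulty: a minimizing sequence could split into two receding $90^\circ$ sub-walls, so the $H^1(\R)$ bound you assert early on (``combined with the fixed boundary data\dots bounded in $H^1$'') does not come for free --- a bound on $\|\theta_n'\|_{L^2}$ plus the limits at $\pm\infty$ gives no $L^2$ control on $\theta_n-\eta_\pi$. However, the mechanism you propose to rule out splitting is not the right one for this problem. You picture two finite-energy, charge-neutral sub-walls with an attractive tail interaction of order $(\text{distance})^{-1}$ and aim for a strict-subadditivity / concentration-compactness comparison with ``twice the $90^\circ$-wall energy.'' But a $90^\circ$ wall at the orientation $\beta=0$ is itself \emph{charged}: along each putative sub-wall $\sin\theta$ changes from $0$ to $1$ (resp.\ $1$ to $0$), so an isolated sub-wall has infinite $\dot H^{1/2}$ energy and the quantity you want to beat is $+\infty$. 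The paper exploits exactly this. Choosing $0<a<b$ with $\theta(a)=\pi/3$ and $\theta(b)=\pi/6$ (so that $b-a$ is controlled by the anisotropy term), the cross term of the nonlocal double integral over $(0,a)\times(b,\infty)$ is bounded below by $C\ln\bigl((b+a)/(b-a)\bigr)$, which forces $a$ and $b$ themselves to be bounded in terms of the energy; combined with $\sin^2 2\theta\ge \tfrac94\,\theta^2$ on $[b,\infty)$ this yields the missing $L^2$ bound on the folded profile $\rho$ and hence compactness. The confinement is thus a logarithmically divergent magnetostatic penalty, not a vanishing $O(1/R)$ correction, and no dichotomy argument is needed; the $-C/R$ picture you describe is the one relevant to $360^\circ$ walls in uniaxial films, where the sub-walls really are charge-free.

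A second, smaller gap: you derive $\thetamin(x)=\pi-\thetamin(-x)$ from uniqueness of the normalized minimizer, but uniqueness is precisely what is \emph{not} available here --- the convexity argument in the variable $u=\sin\theta$ fails because the profile must cross $\theta=\pi/2$, and the paper explicitly leaves uniqueness open. The symmetry is instead obtained constructively: fold $\theta$ to $\rho:\R\to[0,\pi/2]$, replace $\rho$ by its symmetric decreasing rearrangement (which does not increase any term of the energy), and unfold by the odd reflection about $(0,\pi/2)$; the resulting minimizer is symmetric by construction. Finally, a minor point on orientations: the charge-free choices for the $180^\circ$ wall are $\beta\equiv 0\pmod{\pi}$ rather than $\pmod{\pi/2}$, since for $\beta=\pi/2$ one has $\sin(\theta-\beta)\to\mp1$ as $x\to\pm\infty$, again a net line charge.
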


\noindent Similarly to the case of 90$^\circ$-walls, the choice of
  the admissible class $\mathcal A_\pi$ ensures the conditions
  \eqref{thbcpi} at infinity, and for other choices of wall
  orientation
  there is a net line charge as well.

The remainder of this article is structured as follows. In \S3, we
present proofs of the results given above, using primarily variational
methods. In \S4, we conduct a
numerical study of 1D domain walls in fourfold materials using 1D
simulations, and perform 2D simulations of a rectangular film element
to observe static magnetization configurations involving these walls.
Finally in \S5 we conclude and suggest some further extensions to this
work.



\section{Proofs of main results}

The following section is devoted to
motivating the statements of theorems \ref{90WallsResult} and
\ref{180WallsResult}, and presenting their
proofs.


\subsection{$90^\circ$ walls: Proof of theorem \ref{90WallsResult}}

Let us begin by motivating the precise statement of the theorem.
Firstly we note that in principle, the result one would like to obtain
is existence of a solution to \eqref{1DELGeneral} which satisfies
(without loss of generality) the conditions \beq
\lim_{x\to+\infty}\theta(x) = 0, \quad \lim_{x\to-\infty}\theta(x) =
\pi/2,
\label{90Limits}
\eeq 
and is in some sense physical, i.e. having finite energy per unit
  length of the domain wall. Let us recall
the energy per unit length in \eqref{1DEnergyGeneral} for a 1D
wall of orientation $\beta$. In explicit terms, it reads
\begin{multline}
  \mathcal{E}_\beta(\theta) = \frac12 \int_\R
  \brackets{|\theta'|^2 +\frac14 \sin^2 2\theta} \d x \\
  +\frac{\nu}{8\pi}\int_\R\int_\R
  \frac{(\sin(\theta(x)-\beta)-\sin(\theta(y)-\beta))^2}{(x-y)^2} \d x
  \d y. \end{multline}

We would like to choose an admissible class of minimizers
corresponding to $90^\circ$ transition layers with finite energy which
connect two of the global minima of $\mathcal{E}_\beta$ (given by
$\theta(x) = N \pi/2$ for any $N\in \mathbb{Z}$). Without loss of
generality, we can consider profiles satisfying
\eqref{90Limits}. For
$\theta \in H^1_\text{\scriptsize{loc}}(\R)$ the
local part of the energy is locally well-defined. In order that the
nonlocal term be finite for such profiles, we must constrain the wall
orientation $\beta$ appropriately so as to avoid incurring a net
magnetic charge across the wall. To accomplish this we take
$\beta = -\pi/4$.

The 1D $90^\circ$-wall energy may be expressed as
\beq \mathcal{E}_{-\pi/4}(\theta) = \frac12 \int_\R
\brackets{|\theta'|^2 +\frac14 \sin^2 2\theta} \d x +
\frac{\nu}{4}\norm{\sin(\theta+ {\pi}/{4})}_{\dot{H}^{1/2}(\R)}^2,
\label{90Energy}
\eeq
and the Euler--Lagrange
equation associated to  $\mathcal{E}_{-\pi/4}$ is now given
formally by
\beq
0 = -\theta_{xx} + \frac14 \sin4\theta
+\frac{\nu}{2}\cos(\theta+\pi/{4}) \halflap  \sin(\theta+{\pi}/{4}),
\label{90EL}
\eeq with limit conditions \eqref{90Limits}.

The motivation for the statement of theorem \ref{90WallsResult} should
now be clear. We present a slightly abbreviated proof of this result;
much of the machinery follows directly from the work of
\textsc{Chermisi and Muratov} \cite{Chemur} concerning N\'eel walls in
uniaxial materials. Thus we refer the reader to their work when
proving certain steps, and focus here on aspects which are
significantly different.

It is convenient to first record some preliminary lemmas. 

\begin{lemma}[Restriction of rotations]\label{RestrictionOfRotations}
  Let $\theta \in \mathcal{A}_{\pi/2}$.
  Then there exists
  $\tilde{\theta}\in \mathcal{A}_{\pi/2}$ such that
  $\tilde{\theta}(\R)\subset[0,\pi/2]$ and
  $\mathcal{E}_{-\pi/4}(\tilde{\theta})\leq
  \mathcal{E}_{-\pi/4}({\theta})$.
\end{lemma}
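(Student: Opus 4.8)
The plan is to construct $\tilde\theta$ from $\theta$ by a pointwise truncation that confines the range to $[0,\pi/2]$ and to check that each term of $\mathcal E_{-\pi/4}$ does not increase under this operation. Concretely, I would set $\tilde\theta(x) = \max\{0, \min\{\pi/2, \theta(x)\}\}$, i.e. $\tilde\theta = T\circ\theta$ where $T:\R\to[0,\pi/2]$ is the Lipschitz retraction onto the interval. Since $\theta \in \mathcal A_{\pi/2}$ has limits $0$ at $+\infty$ and $\pi/2$ at $-\infty$, truncation does not disturb the asymptotics, so $\tilde\theta - \eta_{\pi/2} \in H^1(\R)$ and $\tilde\theta \in \mathcal A_{\pi/2}$; moreover $\tilde\theta$ is continuous and bounded. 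The first step is therefore to verify this membership and the basic regularity, which is immediate since $T$ is Lipschitz with constant $1$ and fixes the points $0$ and $\pi/2$.

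Next I would treat the local part of the energy. For the Dirichlet term, $|\tilde\theta'| \le |\theta'|$ a.e. by the chain rule for the Lipschitz truncation (on the set where $\theta\in(0,\pi/2)$ we have $\tilde\theta'=\theta'$, and elsewhere $\tilde\theta'=0$), so $\int_\R |\tilde\theta'|^2 \le \int_\R |\theta'|^2$. For the anisotropy term, the key observation is that the integrand $\tfrac14\sin^2 2\theta = \Phi$ is nonnegative and vanishes precisely at the multiples of $\pi/2$; one checks that for every $t\in\R$, $\sin^2(2\,T(t)) \le \sin^2(2t)$ — indeed on $(\pi/2,\pi)$ say, $T(t)=\pi/2$ gives integrand value $0$, which is the minimum, and similarly on the other excluded intervals, so replacing $\theta$ by its truncation can only lower this term. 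Hence the entire local part does not increase.

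The remaining and most delicate step is the nonlocal $\dot H^{1/2}$ term, $\tfrac{\nu}{8\pi}\int_\R\int_\R \frac{(u(x)-u(y))^2}{(x-y)^2}\d x\d y$ with $u = \sin(\theta+\pi/4)$. Here I would argue that the map $t\mapsto \sin(T(t)+\pi/4)$ is a Lipschitz function of $t\mapsto\sin(t+\pi/4)$ composed with a $1$-Lipschitz retraction onto the range $[\sin(\pi/4),\sin(3\pi/4)] = [\tfrac{1}{\sqrt2},1]$ — but one must be careful, because $\sin(t+\pi/4)$ is not monotone in $t$ globally, so the composition is not literally of the form (retraction)$\circ$(original). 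The clean way is to work with $\phi := \theta + \pi/4$ directly: truncating $\theta$ to $[0,\pi/2]$ is the same as truncating $\phi$ to $[\pi/4,3\pi/4]$, and on that interval $\sin$ is concave and its values are pinned down by $\phi$; I would show that $|\sin(T_{[\pi/4,3\pi/4]}(\phi(x))) - \sin(T_{[\pi/4,3\pi/4]}(\phi(y)))| \le |\sin\phi(x) - \sin\phi(y)|$ for all $x,y$, which reduces to a two-variable inequality on $\R^2$ that can be verified by splitting into cases according to which of $\phi(x),\phi(y)$ lie inside, to the left of, or to the right of $[\pi/4,3\pi/4]$ (using $\sin$ increasing on $[-\pi/2,\pi/2]$, symmetry about $\pi/2$, and that the truncation pulls values toward the interval). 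Once this pointwise contraction estimate is in hand, the double integral for $\tilde u$ is bounded by that for $u$ term-by-term in the integrand, so the nonlocal term does not increase either, and $\mathcal E_{-\pi/4}(\tilde\theta)\le\mathcal E_{-\pi/4}(\theta)$ follows. I expect this case analysis for the $\dot H^{1/2}$ contraction to be the main obstacle — it is elementary but requires care because of the non-monotonicity of $\sin$, and it is precisely the point where the choice $\beta=-\pi/4$ (placing the relevant arc symmetrically about the maximum of $\sin$) is being used.
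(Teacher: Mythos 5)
Your treatment of the exchange and anisotropy terms under the clamp $T(t)=\max\{0,\min\{\pi/2,t\}\}$ is correct, as is the check that membership in $\mathcal{A}_{\pi/2}$ is preserved. The gap is in the nonlocal term: the two-variable contraction inequality you propose,
$$
\left|\sin\bigl(T_{[\pi/4,3\pi/4]}(\phi(x))\bigr)-\sin\bigl(T_{[\pi/4,3\pi/4]}(\phi(y))\bigr)\right|\le\left|\sin\phi(x)-\sin\phi(y)\right|,
$$
is false. Take $\theta(x_0)=\pi/4$ and $\theta(y_0)=\pi/4-2\pi$; both values are attained by perfectly admissible winding profiles in $\mathcal{A}_{\pi/2}$. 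Then $\sin(\theta(x_0)+\pi/4)=\sin(\theta(y_0)+\pi/4)=1$, so the right-hand side vanishes, while after clamping $\tilde\theta(y_0)=0$ and the left-hand side equals $1-1/\sqrt{2}>0$. The root of the problem is that your case analysis (``$\sin$ increasing on $[-\pi/2,\pi/2]$, symmetry about $\pi/2$'') tacitly assumes $\phi=\theta+\pi/4$ stays within a single period of the sine; once $\theta$ is allowed to wind, $\sin(T(\phi))$ is not a function of $\sin\phi$ at all (let alone a $1$-Lipschitz one), because $u=\sin(\theta+\pi/4)$ cannot distinguish $\theta$ from $\theta+2k\pi$ while the clamp can. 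Your argument does close for profiles confined a priori to one period, say $\theta(\R)\subset[-\pi/2,\pi]$, so what is missing is precisely a preliminary step that removes the winding.

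The paper supplies that step by a different construction: instead of clamping, it folds $\theta$ into $[0,\pi/2]$ by the accordion map $\theta^T$ built from reflections about the multiples of $\pi/2$. The fold leaves $|\theta'|$ and $\sin^2 2\theta$ unchanged pointwise (so the local terms are preserved exactly, not merely decreased), and for the nonlocal term the paper relates $\sin(\theta^T+\pi/4)$ to $|\sin(\theta+\pi/4)|$ and invokes the elementary inequality $\int_\R u\halflap u\d x\ge\int_\R|u|\halflap|u|\d x$. The two constructions thus differ in kind: a fold exploits the periodic symmetries of all three energy densities and costs nothing on winding profiles, whereas a clamp only sees the symmetry near the target interval. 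If you want to repair your route, a workable variant is to pass to $u=\sin(\theta+\pi/4)$ and replace it by $\tilde u=\max\{|u|,1/\sqrt{2}\}$, which \emph{is} a $1$-Lipschitz function of $u$ and hence admissible for the $\dot H^{1/2}$ comparison; note that $\sin^2 2\theta=(2u^2-1)^2$, so the anisotropy term can only decrease, and then reconstruct $\tilde\theta$ with values in $[0,\pi/2]$ by choosing the appropriate branch of $\arcsin$ on either side of a point where $\theta=\pi/4$ (such a point exists by the intermediate value theorem). As written, however, the nonlocal step of your proof does not go through.
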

\begin{proof}
We let $\theta^T:\R\to [0,\pi/2]$ be defined, for $k\in \mathbb{Z}$, by
$$
\theta^T(x) =
\begin{cases}
\theta(x) - k\pi & \text{ if } \theta(x) \in [k\pi, (k+\half) \pi),
\\
k\pi-\theta(x)  & \text{ if } \theta(x) \in [(k-\half)\pi, k \pi).
\end{cases}
$$
We then have
$$
\norm{\sin 2\theta}^2_{L^2(\R)} = \norm{\sin 2\theta^T}^2_{L^2(\R)}, \quad \norm{\theta'}^2_{L^2(\R)} = \norm{(\theta^T)'}^2_{L^2(\R)},
$$
and the inequality in the lemma follows from setting $\tilde{\theta}=\theta^T$, the inequality
$$
\norm{u}^2_{\dot{H}^{1/2} }= \int_\R u\halflap u \d x \geq \int_\R |u|\halflap |u| \d x, 
$$
and the fact that $\abs{\sin(\theta+\pi/4)}=\sin(\theta^T+\pi/4)$.
\end{proof}

Given this lemma, we may restrict the admissible class to those
$\theta \in \mathcal{A}_{\pi/2}$ also satisfying
${\theta}(\R)\subset[0,\pi/2]$. It is then useful to define a function
$\rho:\R\to[0,\pi/4] \in H^1(\R)$, corresponding to each $\theta$ in
this restricted class, via \beq \rho(x) = \begin{cases} \theta(x) &
  \text{ if } \theta(x)\in[0,\pi/4), \\ \pi/2 -\theta(x) & \text{ if }
  \theta(x)\in[\pi/4,\pi/2].\end{cases}
\label{rhodef}
\eeq It is clear that $\rho$ satisfies
$\mathcal{E}_{-\pi/4}(\rho) =
\mathcal{E}_{-\pi/4}(\theta)$. One then has the following lemma:
\begin{lemma}[Coercivity]\label{LowerBound} Let $\theta \in
    \mathcal{A}_{\pi/2}$
    be such that $\theta(\mathbb R) \subset [0, \pi/2]$ and let $\rho$
    be defined as in \eqref{rhodef}. Then
$$\mathcal{E}_{-\pi/4}(\rho) \geq
\frac{1}{4}\norm{\rho}^2_{H^1(\R)} +
\frac{\nu}{4}\norm{\sin(\rho+{\pi}/{4})}^2_{\dot{H}^{1/2}(\R)}.$$ 
\begin{proof}
  The bound follows immediately from the fact that, since
  $\rho(x) \in [0,\pi/4]$ for all $x\in \R$, one has
$$
\cos \rho(x) \geq 1/\sqrt{2}, \quad \sin \rho(x) \geq \rho(x)/\sqrt{2},
$$
and the identity $\sin^2 2\rho = 4 \sin^2\rho \cos^2\rho$.
\end{proof}
\end{lemma}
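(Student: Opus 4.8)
The plan is to obtain the estimate essentially pointwise, using that the folding construction \eqref{rhodef}, together with Lemma~\ref{RestrictionOfRotations}, confines $\rho$ to the interval $[0,\pi/4]$, where the trigonometric terms in the local energy density of \eqref{90Energy} are comparable to their linearizations. First I would record the elementary inequalities $\cos\rho(x)\ge\cos(\pi/4)=1/\sqrt2$ and $\sin\rho(x)\ge\rho(x)/\sqrt2$, both valid since $\rho(x)\in[0,\pi/4]$ for all $x\in\R$ (the second because $\sin$ is concave on that interval, so its graph lies above the chord through the endpoints). Combining these with the double-angle identity $\tfrac14\sin^2 2\rho=\sin^2\rho\cos^2\rho$ gives a pointwise lower bound $\tfrac14\sin^2 2\rho(x)\ge c_1\rho(x)^2$ with an explicit $c_1>0$, so that the anisotropy contribution to $\mathcal{E}_{-\pi/4}(\rho)$ already dominates a multiple of $\|\rho\|_{L^2(\R)}^2$.

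The remaining step is bookkeeping. The Dirichlet term $\tfrac12\int_\R|\rho'|^2\d x$ controls $\|\rho'\|_{L^2(\R)}^2$ with room to spare, and combining the fraction held back with the anisotropy bound just obtained assembles a positive multiple of $\|\rho\|_{H^1(\R)}^2=\|\rho'\|_{L^2(\R)}^2+\|\rho\|_{L^2(\R)}^2$; note that no Poincar\'e-type inequality is needed --- none is available on $\R$ --- precisely because the $\|\rho\|_{L^2(\R)}^2$ control comes from the anisotropy potential rather than from the gradient. The nonlocal term $\tfrac{\nu}{4}\|\sin(\rho+\pi/4)\|_{\dot{H}^{1/2}(\R)}^2$ appears identically on both sides of the asserted inequality, so it is carried along unchanged, and collecting the local terms gives the stated bound.

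The only bookkeeping point requiring attention is tracking the numerical constants, so that the coefficient of $\|\rho\|_{L^2(\R)}^2$ surviving from the anisotropy term is large enough; this is exactly where the confinement $\rho(\R)\subset[0,\pi/4]$ enters essentially, since a bound permitting $\rho$ to approach $\pi/2$, where the anisotropy density vanishes again, would destroy coercivity. I do not expect an analytic obstacle beyond this arithmetic. Finally, it is worth noting the role of the lemma downstream: together with Lemma~\ref{RestrictionOfRotations} and \eqref{rhodef} it makes $\mathcal{E}_{-\pi/4}$ coercive on the folded admissible class, which is precisely the a priori bound needed to run the direct method --- extracting a weakly $H^1$-convergent minimizing sequence after translating each member so that $\theta$ equals $\pi/4$ at the origin --- in the proof of Theorem~\ref{90WallsResult}.
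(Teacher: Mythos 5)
Your argument is exactly the paper's: the same two pointwise inequalities $\cos\rho\ge 1/\sqrt2$ and $\sin\rho\ge\rho/\sqrt2$ on $[0,\pi/4]$, combined with $\tfrac14\sin^2 2\rho=\sin^2\rho\cos^2\rho$, give $L^2$ control of $\rho$ from the anisotropy term while the gradient and nonlocal terms are carried along unchanged. The only quibble (which applies equally to the paper's own one-line proof) is that these bounds literally yield the prefactor $\tfrac18$ rather than $\tfrac14$ in front of $\norm{\rho}^2_{H^1(\R)}$, a harmless constant that does not affect the coercivity needed downstream.
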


The following lemma provides useful properties for candidate minimizers.
\begin{lemma}[Rearrangement]\label{90rearr}
  Let $\theta\in \mathcal{A}_{\pi/2}$ with
  $\theta(\R)\subset [0,\pi/2]$. Then
  $\exists \theta^o \in C(\R; [0,\pi/2])$ satisfying
  $\mathcal{E}_{-\pi/4}(\theta^o)\leq \mathcal{E}_{-\pi/4}(\theta)$
  and the following properties:
$$
\lim_{x\to +\infty}\theta^o(x)=0,\quad \lim_{x\to
  -\infty}\theta^o(x)=\frac{\pi}{2},\quad
\theta^o(0)=\frac{\pi}{4},\quad
\theta^o(x)=\frac{\pi}{2}-\theta^o(-x), 
$$
and $\theta^o$ is non-increasing.
\label{rearr-norm}
\end{lemma}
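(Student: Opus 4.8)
The plan is to obtain $\theta^o$ from a single symmetric-decreasing rearrangement of the folded profile $\rho$ of \eqref{rhodef}, followed by a symmetric ``unfolding''. Recall that \eqref{rhodef} associates to $\theta$ (with $\theta(\R)\subset[0,\pi/2]$) a function $\rho\in H^1(\R)$ with $\rho(\R)\subset[0,\pi/4]$, $\rho(\pm\infty)=0$, $\rho=\pi/4-|\theta-\pi/4|$ (so $\rho\in C(\R)$), and $\mathcal{E}_{-\pi/4}(\rho)=\mathcal{E}_{-\pi/4}(\theta)$. Since $\theta\in C(\R)$ with $\theta(-\infty)=\pi/2$ and $\theta(+\infty)=0$, the intermediate value theorem produces a point where $\theta=\pi/4$, hence where $\rho=\pi/4$; combined with $\rho\le\pi/4$ this gives $\norm{\rho}_{L^\infty(\R)}=\pi/4$, a fact we will need for continuity at the origin.

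First I would record the standard properties of the symmetric-decreasing rearrangement $u\mapsto u^\ast$ on $\R$: $u^\ast$ is even, non-increasing on $[0,\infty)$, and equimeasurable with $|u|$ (so $\int_\R G(\rho^\ast) \d x = \int_\R G(\rho) \d x$ for all Borel $G\ge 0$, and $\norm{\rho^\ast}_{L^\infty(\R)}=\norm{\rho}_{L^\infty(\R)}$); it satisfies the P\'olya--Szeg\H{o} inequality $\norm{(\rho^\ast)'}_{L^2(\R)}\le\norm{\rho'}_{L^2(\R)}$, whence $\rho^\ast\in H^1(\R)\subset C(\R)$ with $\rho^\ast(\pm\infty)=0$; and for $v\ge 0$ vanishing at infinity it does not increase the Gagliardo seminorm, $\norm{v^\ast}_{\dot H^{1/2}(\R)}\le\norm{v}_{\dot H^{1/2}(\R)}$ (see \cite{LiebLoss,Chemur}, where this is used in exactly this way). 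Applying this to $\rho$ yields $\rho^\ast\in H^1(\R;[0,\pi/4])$, continuous, even, non-increasing on $[0,\infty)$, with $\rho^\ast(0)=\norm{\rho}_{L^\infty(\R)}=\pi/4$ and $\rho^\ast(\pm\infty)=0$.

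Next I would verify that replacing $\rho$ by $\rho^\ast$ does not increase any of the three terms of $\mathcal{E}_{-\pi/4}$. The Dirichlet term decreases by P\'olya--Szeg\H{o}; the anisotropy term $\frac18\int_\R \sin^2 2\rho \d x$ is unchanged by equimeasurability. For the nonlocal term one uses that $s\mapsto\sin(s+\pi/4)$ is strictly increasing on $[0,\pi/4]$: setting $v=\sin(\rho+\pi/4)-1/\sqrt{2}\ge 0$ (subtracting the constant makes $v$ vanish at infinity without affecting the $\dot H^{1/2}$ seminorm), the function $\sin(\rho^\ast+\pi/4)-1/\sqrt{2}$ is symmetric-decreasing and, by monotonicity of the composition, equimeasurable with $v$; hence it equals $v^\ast$, and the Gagliardo inequality gives $\norm{\sin(\rho^\ast+\pi/4)}_{\dot H^{1/2}(\R)}\le\norm{\sin(\rho+\pi/4)}_{\dot H^{1/2}(\R)}$. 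Therefore $\mathcal{E}_{-\pi/4}(\rho^\ast)\le\mathcal{E}_{-\pi/4}(\rho)=\mathcal{E}_{-\pi/4}(\theta)$.

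Finally I would unfold, setting $\theta^o(x)=\rho^\ast(x)$ for $x\ge0$ and $\theta^o(x)=\pi/2-\rho^\ast(-x)$ for $x<0$. Because $\rho^\ast(0)=\pi/4$ this is continuous on $\R$; it has range in $[0,\pi/4]$ on $[0,\infty)$ and in $[\pi/4,\pi/2]$ on $(-\infty,0]$, is non-increasing on $\R$, and satisfies $\theta^o(0)=\pi/4$, $\theta^o(+\infty)=0$, $\theta^o(-\infty)=\pi/2$, and $\theta^o(x)=\pi/2-\theta^o(-x)$ by construction (one also checks $\theta^o\in\mathcal{A}_{\pi/2}$). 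Applying the folding \eqref{rhodef} to $\theta^o$ returns exactly $\rho^\ast$ (for $x\ge0$ it returns $\rho^\ast(x)$, for $x<0$ it returns $\pi/2-\theta^o(x)=\rho^\ast(-x)=\rho^\ast(x)$ by evenness), so $\mathcal{E}_{-\pi/4}(\theta^o)=\mathcal{E}_{-\pi/4}(\rho^\ast)\le\mathcal{E}_{-\pi/4}(\theta)$. The crux of the argument --- and essentially the only point not already contained in \cite{Chemur} --- is that a single rearrangement must control all three energy terms at once: this works precisely because the nonlinearity $\sin(\cdot+\pi/4)$ is monotone on the range $[0,\pi/4]$ of $\rho$, and because $\norm{\rho}_{L^\infty(\R)}$ equals exactly $\pi/4$, which is what makes the unfolded profile continuous across the value $\pi/4$ at the origin.
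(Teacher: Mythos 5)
Your proof is correct and follows essentially the same route as the paper: fold $\theta$ to $\rho$ via \eqref{rhodef}, take the symmetric decreasing rearrangement $\rho^\ast$ (using equimeasurability for the anisotropy term, P\'olya--Szeg\H{o} for the Dirichlet term, and the identity $[\sin(\rho+\pi/4)-\sin(\pi/4)]^\ast=\sin(\rho^\ast+\pi/4)-\sin(\pi/4)$ together with the rearrangement inequality for the $\dot H^{1/2}$ seminorm for the nonlocal term), and unfold as in \eqref{invdef}. Your explicit verification that $\|\rho\|_{L^\infty}=\pi/4$, so that $\rho^\ast(0)=\pi/4$ and the unfolded profile is continuous at the origin, is a detail the paper leaves implicit, but it is not a departure from its argument.
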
 
\begin{proof}
  The proof here is similar to that of lemma 4 in \cite{Chemur}. Let
  $\rho:\R\to[0,\pi/4]$ be defined as in \eqref{rhodef} and let
  $\rho^*$ denote its symmetric decreasing rearrangement. By standard
  properties of rearrangements (as in \cite{Chemur}) we have firstly
  that \beq \int_\R \sin^2 2\rho^* \d x = \int_\R \sin^2 2 \rho \d x .
\label{L4-ani}
\eeq Secondly,
$$ [\sin(\rho+\pi/4)-\sin(\pi/4)]^* = \sin(\rho^*+\pi/4)-\sin(\pi/4).
$$
Using this, along with lemma 3 in \cite{Chemur}, we see that \beq
\norm{\sin(\rho+\tfrac{\pi}{4})}^2_{\dot{H}^{1/2}(\R)} \geq
\norm{\sin(\rho^*+\tfrac{\pi}{4})}^2_{\dot{H}^{1/2}(\R)}.
\label{L4-nonloc}
\eeq
From \cite[Lemma 7.17]{LiebLoss}, we have
\beq
\int_\R (\rho^*_x)^2 \d x \leq \int_\R \rho_x^2 \d x.
\label{L4-exc}
\eeq Thus, combining \eqref{L4-ani}, \eqref{L4-nonloc}, and
\eqref{L4-exc}, we obtain
$$
\mathcal{E}_{-\pi/4}(\rho^*) \leq \mathcal{E}_{-\pi/4}(\rho)
= \mathcal{E}_{-\pi/4}(\theta).
$$
Finally, defining $\theta^o \in C(\R; [0,\pi/2])$ via \beq
\theta^o(x) = \begin{cases} \rho^*(x) & \text{ if } x\geq0 \\ \pi/2
  -\rho^*(-x) & \text{ if } x<0,\end{cases}
\label{invdef}
\eeq it is clear that
$\mathcal E_{-\pi/4}(\theta^o)=\mathcal E_{-\pi/4}(\rho^*)$, and
that $\theta^o$ satisfies the properties given in the lemma.
\end{proof}

We now turn to the proof of theorem \ref{90WallsResult}.

\emph{Step 1: Existence.}  Take a minimizing sequence
$\{\theta_n\} \subset \mathcal{A}_{\pi/2}$. By translation-invariance
and lemmas \ref{RestrictionOfRotations} and \ref{90rearr} we may
assume
$$
\theta_n \in C(\R; [0,\pi/2]), \quad \theta_n(0)=\pi/4, \quad
\theta_n(x)=\pi/2-\theta_n(-x),
$$
and $\theta_n$ are non-increasing.  For each $n$, define
$\rho_n:\R\to[0,\pi/4]$ as in \eqref{rhodef}. From lemma
\ref{LowerBound} we have
$$
\frac{1}{4}\norm{\rho_n}^2_{H^1(\R)} +
\frac{\nu}{4}\norm{u_n}^2_{\dot{H}^{1/2}(\R)} \leq C <\infty,
$$
where $u_n = \sin(\rho_n+\pi/4) - \sin (\pi/4)$.  We may then
extract a subsequence (not relabelled) such that the following weak
convergences hold:
$$
\rho_n \weak \rho \; \text{in} \; H^1(\R), \quad u_n \weak u \;
\text{in} \; H^{1/2}(\R),
$$
Moreover, by
  compact embedding of the spaces $H^1(\mathbb \R)$ and
  $H^{1/2}(\mathbb R)$ into $L^2_{loc}(\R)$, we have, upon extraction
  of a further subsequence:
$$
\rho_n \to \rho \; \text{and} \; u_n \to u \; \text{strongly in} \;
L^2_{loc}(\R) \; \text{and a.e.} \;\text{in} \; \R.
$$  
Therefore, we have $u = \sin (\rho+\pi/4) - \sin (\pi/4)$ a.e. in
$\mathbb R$.  Then, using Fatou's lemma applied to the second
term in the definition of the energy and lower semicontinuity of
homogeneous $H^1(\R)$ and $H^{1/2}(\R)$ norms, one obtains weak lower
semicontinuity of $\mathcal{E}_{-\pi/4}(\rho)$ with respect to
the weak convergences considered. Passing to the limit
$n\to \infty$, we thus obtain
$$
\mathcal{E}_{-\pi/4}(\rho) \leq \liminf_{n\to\infty}
\mathcal{E}_{-\pi/4}(\rho_n) =\liminf_{n\to\infty}
\mathcal{E}_{-\pi/4}(\theta_n).
$$
Given such a $\rho$, we construct a function
$\thetamin:\R\to[0,\pi/2]$ as in \eqref{invdef}, that has the
properties
$$
\thetamin(0)=\pi/4, \quad \thetamin(x)=\pi/2-\thetamin(-x),\quad
\lim_{x\to+\infty} \thetamin(x) = 0,
$$
and $\thetamin$ is non-increasing. Since
$\mathcal{E}_{-\pi/4}(\rho) =
\mathcal{E}_{-\pi/4}(\thetamin)$,
we then conclude that $\thetamin$ is a minimizer.

\emph{Step 2: Regularity.} Since $\thetamin$ is a minimizer, it must
also be a weak solution to \eqref{90EL}. That is, $\psi=\thetamin$ is
a weak solution of the equation \beq \psi'' + a(x) \cos 2\psi + b(x)
\cos (\psi+\pi/4)=0,
\label{ELODE}
\eeq where $a(x) = \half \sin 2\thetamin$ and
$b(x) = \frac{\nu}{2} \halflap \sin(\theta+\pi/4)$. It is easy to
see from monotonicity of $\thetamin(x)$ and the limit conditions
\eqref{90Limits} that $a(x) \in L^2(\R)$. Moreover, since \beq
\norm{\halflap u}_{L^2(\R)}^2 = \norm{d u \over dx}^2_{L^2(\R)} \qquad
\forall u \in H^1(\R),
\label{nonlocalest}
\eeq and since $[\sin(\thetamin+\pi/4)]' \in L^2(\R)$, we may see
that $b(x)\in L^2(\R)$, as well. Examining \eqref{ELODE}, we may then
conclude that $\thetamin_{xx} \in L^2(\R)$, and thus that
$\thetamin_x \in H^1(\R)$. Furthermore, Morrey's theorem then tells us
that $\thetamin_x \in C(\R)\cap L^\infty(\R)$, and so
$\thetamin \in C^1(\R)$. Going a step further, we differentiate $a(x)$
and $b(x)$ to get $a'(x)=\sin 2\thetamin\cos 2 \thetamin \thetamin_x$
and $b'(x) = \frac{\nu}{2} \halflap[(\sin(\thetamin+\pi/4))_x]$.
Clearly $a'(x)\in L^2(\R)$, so $a\in H^1(\R)$.

We would like to show the same for $b(x)$. Applying
$\eqref{nonlocalest}$ to $u= (\sin(\thetamin+\pi/4))_x$, this then
amounts to having to show that $u_x \in L^2(\R)$ (we already know that
$u\in L^2(\R)$). We have
$u_x=(\sin(\thetamin+\pi/4))_{xx} = \thetamin_{xx}
\cos(\thetamin+\pi/4) - (\thetamin_x)^2 \sin(\thetamin+\pi/4)$.
Applying the interpolation inequality as in
\cite{CapellaOttoMelcher}:
$$
\norm{\thetamin_x}_{L^4(\R)} \leq 
\norm{\thetamin_x}_{L^\infty(\R)}^{1/2}
\norm{\thetamin_x}_{L^2(\R)}^{1/2} < \infty,
$$
 one may then conclude that $b'(x)\in L^2(\R)$. Again,
from \eqref{ELODE}, this gives $\thetamin_{xx}\in H^1(\R)$ and thus
$\thetamin_{xx}\in C(\R)\cap L^\infty(\R)$, implying that
$\thetamin\in C^2(\R)$ is a classical solution to
\eqref{90EL}. These arguments may then be bootstrapped to conclude
that $\thetamin \in C^\infty(\R)$.

\emph{Step 3: Strict monotonicity.} The proof here is similar to
that in \cite{Chemur}. From the previous steps we know that
$\thetamin_x \leq 0$ on $\R$ and $\thetamin \in C^\infty(\R)$. We
claim that in fact $\thetamin_x < 0$ on $\R$. First, as in
\cite{CapellaOttoMelcher}, we show $\thetamin_x(0)<0$: if we assume
$\thetamin(0)=\pi/4$ and $\thetamin_x(0)=0$, then uniqueness of
solutions for the initial value problem implies $\thetamin(x)=\pi/4$
identically, contradicting the limit conditions \eqref{90Limits}.

Next, we show $\thetamin_x<0$ on $\R^+$. Assume there exists an
$x_*>0$ such that this is false: i.e.~$\thetamin_x(x_*)=0$. This,
together with the non-increasing property of $\thetamin(x)$ we
already have, implies that $\thetamin_{xx}(x_*)=0$ as well.
Differentiating the Euler--Lagrange equation and evaluating at $x_*$,
we then have
$$
\thetamin_{xxx}(x_*) = \frac{\nu}{2}\cos(\thetamin(x_*) +\pi/4)g(x_*),
$$
where we defined
$$
g(x) = \halflap \cos(\thetamin(x)+\pi/4).
$$
Computing $g(x_*)$ directly, using the integral representation of
$\halflap$ and noticing that by the assumption on $x_*$ the
  obtained integral converges, we find
$$
g(x) = -\frac{1}{\pi}\int_0^\infty \frac{4x_*y \thetamin_x(y)\cos(\thetamin(y)+\pi/4)}{(x_*-y)^2(x_*+y)^2}\d y.
$$
Since $\thetamin_x \leq 0$ on $\R$, with the inequality strict on a set of positive measure, we see that $g(x_*)>0$. Moreover, we know that $\cos(\thetamin+\pi/4)>0$ on $\R^+$, and hence $\thetamin_{xxx}(x_*)>0$, implying that $\thetamin$ is locally increasing at $x_*$. This is a contradiction. Strict monotonicity on $\R$ follows from the fact that $\thetamin(x) = \pi/2 -\thetamin(-x)$ as proved in step 1.

\emph{Step 4: Uniqueness (up to translations).}  Let $\theta^{(1)}$
and $\theta^{(2)}$ be two distinct minimizers. After suitable
translations these satisfy
$\theta^{(1)}(0)=\theta^{(2)}(0)=\pi/4$. Define
$u =\sin(\theta+\pi/4)$. We may write the energy
$\mathcal{E}_{-\pi/4}$ in terms of $u$ as
$\mathcal E_{-\pi/4}(\theta) = E(u)$, where
$$
E(u) = \frac12 \int_\R \frac{u_x^2}{1-u^2} \d x + \frac12 \int_\R
\brackets{u^2-\frac12}^2 \d x + \frac{\nu}{4} \int_\R u\halflap u \d x
$$
Then define 
$$
\tilde{\theta}(x) = \begin{cases} \sin^{-1}(\half(u^{(1)}(x) +
  u^{(2)}(x))) - \pi/4 & \text{ if } x \geq 0, \\ 3\pi/4 -
  \sin^{-1}(\half(u^{(1)}(x) + u^{(2)}(x))) & \text{ if } x
  < 0,\end{cases}
$$
where $u^{(1)} = \sin(\theta^{(1)}+\pi/4)$ and
$u^{(2)}=\sin(\theta^{(2)}+\pi/4)$. Note that
$\sin(\tilde{\theta}+\pi/4) = \half(u^{(1)} + u^{(2)})$, and is
symmetric decreasing.

Arguing as in \cite{Chemur}, we have
$$
(\tilde{\theta}_x)^2 \leq \frac{(\theta^{(1)}_x)^2+(\theta^{(2)}_x)^2}{2}.
$$
Then, since the last two terms in $E(u)$ above are strictly
convex in $u$ (at least, for $u\in[\frac{1}{\sqrt{2}},1]$, which is
equal to the possible range of values of $u$ that we have here), we
have that
$$
\mathcal{E}_{-\pi/4}(\tilde{\theta})< \frac{\mathcal{E}_{-\pi/4}(\theta^{(1)}) + \mathcal{E}_{-\pi/4}(\theta^{(2)})}{2},
$$
which contradicts the minimality of $\theta^{(1)}$ and $\theta^{(2)}$.
This concludes the proof of Theorem \ref{90Exist}.

\begin{remark}
  It should be possible to use the arguments of \cite{MurYan} to prove
  that the minimizers in theorem \ref{90Exist} are the unique monotone
  $90^\circ$ wall profiles, i.e., that the minimizers are the unique,
  up to translations, monotone critical points of the energy
  $\mathcal E_{-\pi/4}$ in $\mathcal A_{\pi/2}$.
\end{remark}


\subsection{$180^\circ$ walls: Proof of theorem \ref{180WallsResult}}
Similarly to the above analysis of $90^\circ$-walls, we would first
like to write down a 1D wall energy for $180^\circ$-walls and define
an appropriate admissible class for minimizers. We assume,
without loss of generality, a profile connecting the optimal uniform
states $\theta=\pi$ at $-\infty$ and $\theta=0$ at $+\infty$. The
appropriate wall orientation to avoid net charge is given by
$\beta = 0$. The 1D wall energy is thus expressed as \beq
\mathcal{E}_0(\theta) = \frac12 \int_\R \brackets{|\theta'|^2
  +\frac14 \sin^2 2\theta} \d x +
\frac{\nu}{4}\norm{\sin\theta}_{\dot{H}^{1/2}(\R)}^2,
\label{180Energy}
\eeq with the admissible class for minimizers given by
$\mathcal{A}_{\pi}$. The Euler--Lagrange equation associated to
$\mathcal{E}_0$ is given by \beq 0 = -\theta_{xx} + \frac14\sin4\theta
+\frac{\nu}{2}\cos\theta \halflap \sin\theta,
\label{180EL}
\eeq
with limit conditions
\beq
\lim_{x\to+\infty}\theta(x) = 0, \quad
\lim_{x\to-\infty}\theta(x) = \pi.
\label{180BC}
\eeq

We now turn to the proof of theorem \ref{180WallsResult}. Much of the
proof follows by direct analogy with the previous section. Indeed,
following the proof of theorem \ref{90WallsResult}, it is easy to see
that lemmas \ref{RestrictionOfRotations} and \ref{90rearr} generalize
trivially such that we can immediately restrict the admissible class
to non-increasing $\theta\in\mathcal{A}_\pi$ satisfying
$\theta(\R)=(0,\pi)$ along with the properties 
\beq
\quad \theta(0)=\frac{\pi}{2},\quad
\theta(x)=\pi-\theta(-x). \eeq Note that we do not prove uniqueness
here; the methods used to prove uniqueness for $90^\circ$-walls in the
previous section do not apply due to the fact that the anisotropy
energy is nonconvex as a function of $u=\sin\theta$ for some values of
$\theta$ which, in this case, the profile must take. The question of
uniqueness of minimizers for this problem remains open.

Lemma \ref{LowerBound}, however, does not generalize to this
case. This is again due to the same issue of nonconvexity that causes
problems for uniqueness. In order to obtain compactness in
$H^1(\R)$ for the minimizing sequence, we have to prove a bound
on the $L^2(\R)$ norm of $\rho$, this time defined as \beq
\rho(x) = \begin{cases} \theta(x) & \text{ if } \theta(x)\in[0,\pi/2),
  \\ \pi -\theta(x) & \text{ if } \theta(x)\in[\pi/2,\pi]
  , \end{cases}
\label{rhodef180}
\eeq such that once again
$\mathcal{E}_0(\theta)=\mathcal{E}_0(\rho)$. Physically, the
issue with compactness which has occurred here can be interpreted as
the question of whether it is energetically preferable for the
$180^\circ$ transition layer to split into two well-separated
$90^\circ$-walls (which would result in the minimizing sequence weakly
converging to $\pi/2$, which is clearly outside of the admissible
class) or whether the full transition occurs mostly over a finite
interval.

It is clear that the local part of the energy \eqref{180Energy} is
unchanged by having an arbitrarily large region with $\theta=\pi/2$. Below, we show that this is not possible due to the nonlocal term (a similar argument was used
in the analysis of existence of $360^\circ$ walls in uniaxial
materials \cite{MuratovKnupfer}).
Indeed, for $\theta$ in the above class, there exist two numbers $0 < a < b$ such that $\theta(a) = \pi/3$ and
$\theta(b) = \pi/6$. From the anisotropy term alone, we get that
$b - a$ remains bounded above by a multiple of the energy. Using the known symmetry of $\theta$ (i.e. that
$\sin\theta(-x)=\sin\theta(x)$), the nonlocal term in the energy
\eqref{180Energy} is proportional to \beq I=\int_0^\infty
\int_0^\infty
\brackets{\frac{(\sin\theta(x)-\sin\theta(y))^2}{(x-y)^2} +
  \frac{(\sin\theta(x)-\sin\theta(y))^2}{(x+y)^2}}\d x\d y.  \eeq We
can estimate $I$ from below by neglecting the interval $(a,b)$ from
the integrals. Defining \beq f(x,y) =
\frac{(\sin\theta(x)-\sin\theta(y))^2}{(x-y)^2} +
\frac{(\sin\theta(x)-\sin\theta(y))^2}{(x+y)^2} \geq 0, \eeq
 we have \beq I \geq \int_0^a\int_0^a
f(x,y) \d x\d y+ 2\int_0^a\int_b^\infty f(x,y) \d x\d
y+\int_b^\infty\int_b^\infty f(x,y) \d x\d y.  \eeq We may then
estimate the cross term as follows:
$$ \int_0^a\int_b^\infty f(x,y) \d x\d y \geq C\int_0^a\int_b^\infty
\brackets{\frac{1}{(x-y)^2} + \frac{1}{(x+y)^2}}\d x\d y = C
\ln\brackets{\frac{b+a}{b-a}},
$$
for some universal $C > 0$.

One can see that the nonlocal term forces both $a$ and $b$ to be bounded by a multiple of the energy. 
Then, in order to get a bound on  $\norm{\rho}_{L^2(\R)}$, we can use the following bounds on the anisotropy energy
\beq
C \geq \mathcal{E}_0(\rho) \geq\frac14 \int_0^\infty \sin^2 2\theta\d x = \frac14\int_0^b \sin^2 2\theta + \frac14\int_b^\infty \sin^2 2\theta\d x.
\eeq
For the second term, we have $\theta(x) \in [0,\pi/6]$ for $x \in [b,\infty)$, such that $\cos \theta \geq \sqrt{3}/2$ and $\sin \theta \geq \theta\sqrt{3}/2$. Using the identity $\sin^2 2\theta = 4 \cos^2\theta \sin^2\theta$ we then have
\beq
\frac14\int_b^\infty \sin^2 2\theta \geq \frac{9}{16}\int_b^\infty \theta^2 \d x.
\label{BtoInfty}
\eeq
Additionally, 
\beq
\int_0^b \theta^2 \d x \leq C.
\label{ZerotoB}
\eeq
since we know $b$ is finite and $\theta(\R) \subset [0,\pi]$. 
We can then conclude that the transition from $\theta=\pi/2$ to $\theta=0$ as $x$ increases takes place over a finite distance from $x=0$, and that
\beq \norm{\rho}_{L^2(\R)} \leq C, \eeq for some $C > 0$ depending on
$\mathcal E_0(\theta)$.

One then has, for a minimizing sequence $(\rho_n)$, that
$\norm{\rho_n}_{H^1(\R)} \leq C$ and $\norm{\sin
  \rho_n}_{\dot{H}^{1/2}(\R)} \leq C$,
for some $C > 0$ and all $n \in \mathbb N$. One can then extract
weakly convergent subsequences, and the proof of existence of
minimizers, along with the regularity and strict monotonicity, follow
in precisely the same fashion as in the proof of theorem
\ref{90WallsResult}.



\section{Numerical study}

In order to assess the role played by the one-dimensional domain
wall solutions constructed in the preceding sections for the
  domain patterns in two-dimensional films, we use a
finite-difference scheme to solve the overdamped
Landau--Lifshitz--Gilbert equation \eqref{2DLLG}, coupled with
optimal-grid-based methods to compute the stray field. The
algorithm is fully discussed in the work of \textsc{Muratov and
  Osipov} \cite{Murosi}.

\subsection{One-dimensional simulations}
We first aim to solve \eqref{2DLLG} in 1D to obtain the wall profiles
corresponding to our existence results in the previous section. We
evolve the equation above, beginning from initial conditions which
approximate the domain wall profile in question, until a steady state
is reached. Figure \ref{90wallsfig} below displays $90^\circ$
wall profiles for $\nu = 1,5$ and $50$ (upper panels), and their corresponding tails in log-log coordinates (lower panels). Figure \ref{180wallsfig} displays the analogous plots for $180^\circ$
walls. 

Examining figures \ref{90wallsfig} and \ref{180wallsfig}, one can observe that the $90^\circ$ and $180^\circ$ wall profiles behave in much the same manner as $\nu$ is increased. Additionally, the tails all display algebraic decay proportional to $1/x^2$ far from the core, as was proved to be the case for $180^\circ$ N\'eel walls in uniaxial materials \cite{Chemur}. This decay sets in further away from the core as $\nu$ is increased, and is clearly preceded for large $\nu$ (e.g. in the plots for $\nu=50$) by a logarithmic crossover region between the core and the algebraic tails, as is well known for N\'eel walls in uniaxial materials \cite{Hubert, Garcia2, Melcher}. We can conclude that these wall profiles show effectively the same behaviour as N\'eel walls in uniaxial materials. Finally, in figure \ref{180wallsfig}, for $\nu=1$ one can see that the $180^\circ$ wall is starting to separate into two $90^\circ$ walls. This is due to the fact that at $\nu=0$ only the $90^\circ$ walls exist, while for $\nu > 0$ the $180^\circ$ wall is stabilized purely by the magnetostatic interaction.

\begin{figure}[h!]
\begin{center}
\includegraphics[width=0.679\textwidth]{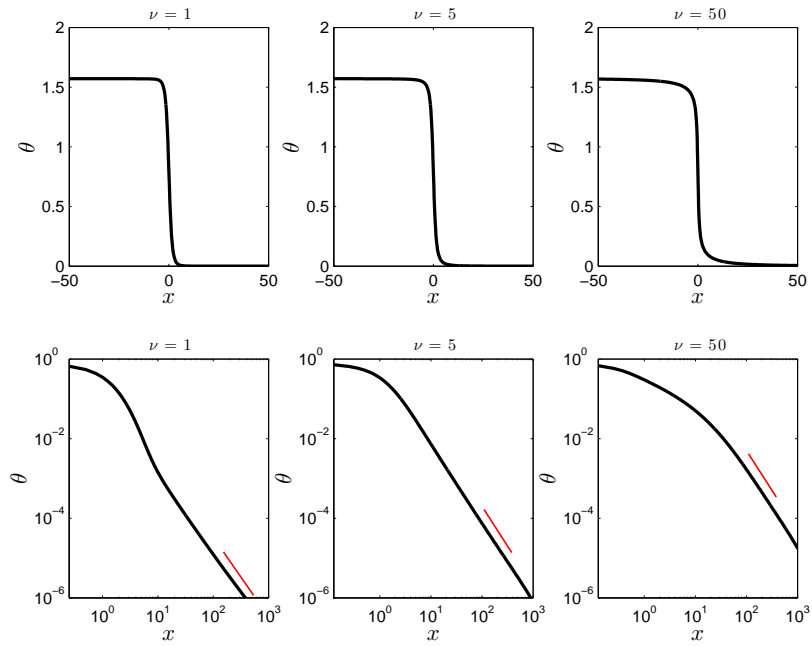}
\caption{Computed 1D 90$^\circ$ wall profiles for $\nu = 1, 5$ and $50$. Upper panels show the wall profiles near the transition layer. Lower panels show the corresponding decay in the tails, plotted in log-log coordinates. Red line segments in the lower panels indicate an algebraic decay of $\theta \sim 1/x^2$.}
\label{90wallsfig}
\end{center}
\end{figure}

\begin{figure}[h!]
\begin{center}
\includegraphics[width=0.679\textwidth]{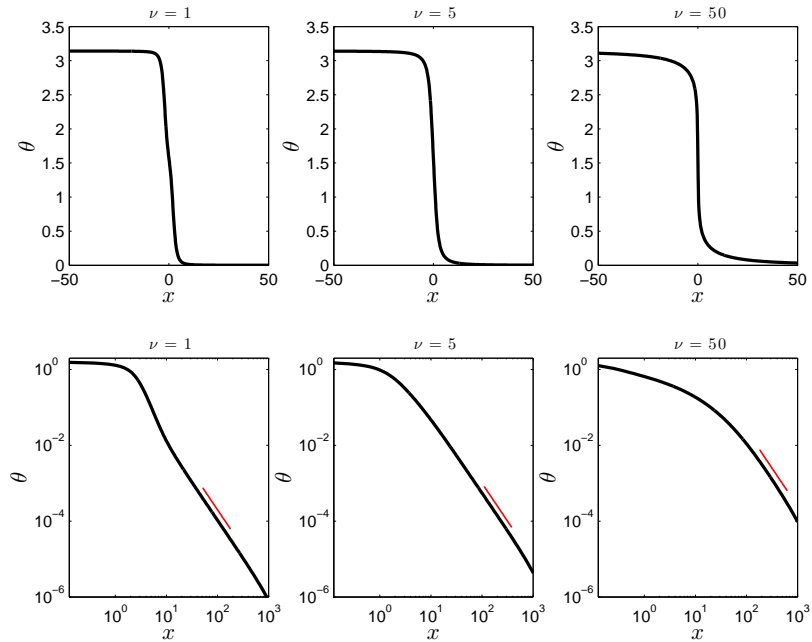}
\caption{Computed 1D 180$^\circ$ wall profiles for $\nu = 1, 5$ and $50$. Upper panels show the wall profiles near the transition layer. Lower panels show the corresponding decay in the tails, plotted in log-log coordinates. Red line segments in the lower panels indicate an algebraic decay of $\theta \sim 1/x^2$.}
\label{180wallsfig}
\end{center}
\end{figure}

\subsection{Two-dimensional simulations}

We now solve \eqref{2DLLG} on a spatial domain
$\Omega =[0, L_x] \times [0, L_y]$ with the edges of the domain
aligned with the easy axes of the material. We have effectively just
three parameters in the model: $L_x, L_y$ and
$\nu$. Each of these has the same intuitive effect on the energy:
increasing domain size or $\nu$ increases the strength of the
magnetostatic interaction, relative to anisotropy and exchange.

In the figures below we display stationary configurations
(remanent states) for a range of domain sizes and values of
$\nu$. Physically, the fixed parameters we use correspond to those of
an epitaxial cobalt film, where $\nu$ can be thought
of as the suitably rescaled film thickness. The parameters (recall the definitions in
equation \eqref{Parameters}) are given approximately by
$\ell \approx 3.37$nm, $Q\approx 0.08$, and thus
$L\approx 12$nm and $\nu$ represents the film thickness in
  nanometers \cite{moieee}.

Starting from 3 different initial states, we can isolate 4 distinct
stationary solutions, as follows. In figure \ref{nusmall} below we
observe the well known `C' (panel (a)) and `S' (panel (b)) states.
These states are close to monodomain states, but with edge domains
appearing along the short edges of the sample to appease the
magnetostatic energy at the boundary. This energy term prefers the
magnetization to align tangent to the boundary wherever possible.
These states result in magnetically charged regions close to the short
edges, but with zero energy density in the bulk of the sample.  There
are necessarily half boundary vortices, which also carry charge, in
two of the corners. The C state has lower energy than the S
state. In both panels, the
size of the sample is given (in units of $L$) by $L_x = 8, L_y=16$
(the figures are to scale), with $\nu=5$. In panel (b), the initial
condition for the simulation was a monodomain with $\theta=\pi/3$; in
panel (a), we took $\theta = \pi/2$ in the lower half of the sample
and $\theta = -\pi/2$ in the upper half. In the figures, the color
indicates the value of $\theta(x,y)$, and the vectors show the
corresponding magnetization field.

\begin{figure}[h!]
\begin{center}
\includegraphics[width=.8\textwidth]{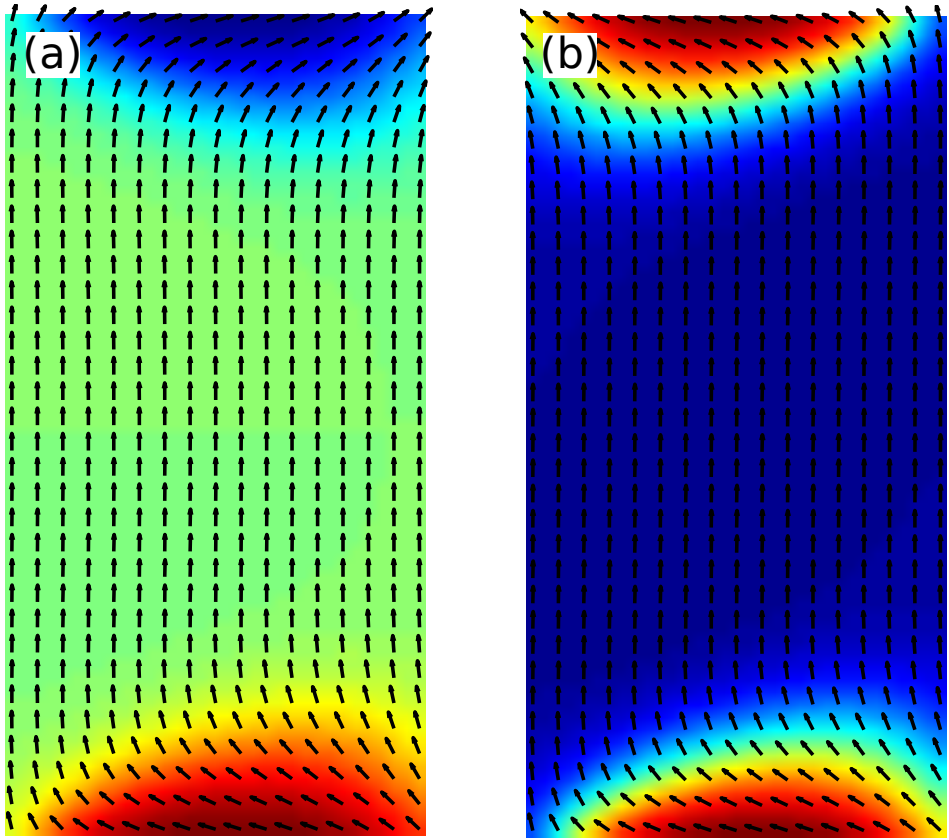}
\caption{(color online) C-state (panel (a)) and S-state (panel (b)). Domain size in both panels is $L_x = 8, L_y=16$, with $\nu=5$.}
\label{nusmall}
\end{center}
\end{figure}

In figure \ref{nularge}, we see instead states consisting of an arrangement of $90^\circ$ and $180^\circ$ domain walls, with interesting phenomena appearing at the lower boundary. In both figures, one has $L_x = 32, L_y=64$, and the simulations were initialized with $\theta=0$ in the right half of the sample and $\theta=\pi$ in the left half. In panel (a), $\nu=5$; in panel (b), $\nu=10$.

In the top halves of both panels, one observes the same phenomenology. There is a $180^\circ$-wall aligned vertically in the center of sample, which splits into two $90^\circ$-walls, which then terminate at the top corners. This arrangement separates the top half of the sample into three domains with orientations along the easy directions $\theta=0,\pi/2$, and $\pi$, which also coincide with the orientations of the edges.  The arrangements are strongly reminiscent of the so-called Landau states \cite{Hubert}; we refer to them as half-Landau states.
\begin{figure}[h!]
\begin{center}
\includegraphics[width=.8\textwidth]{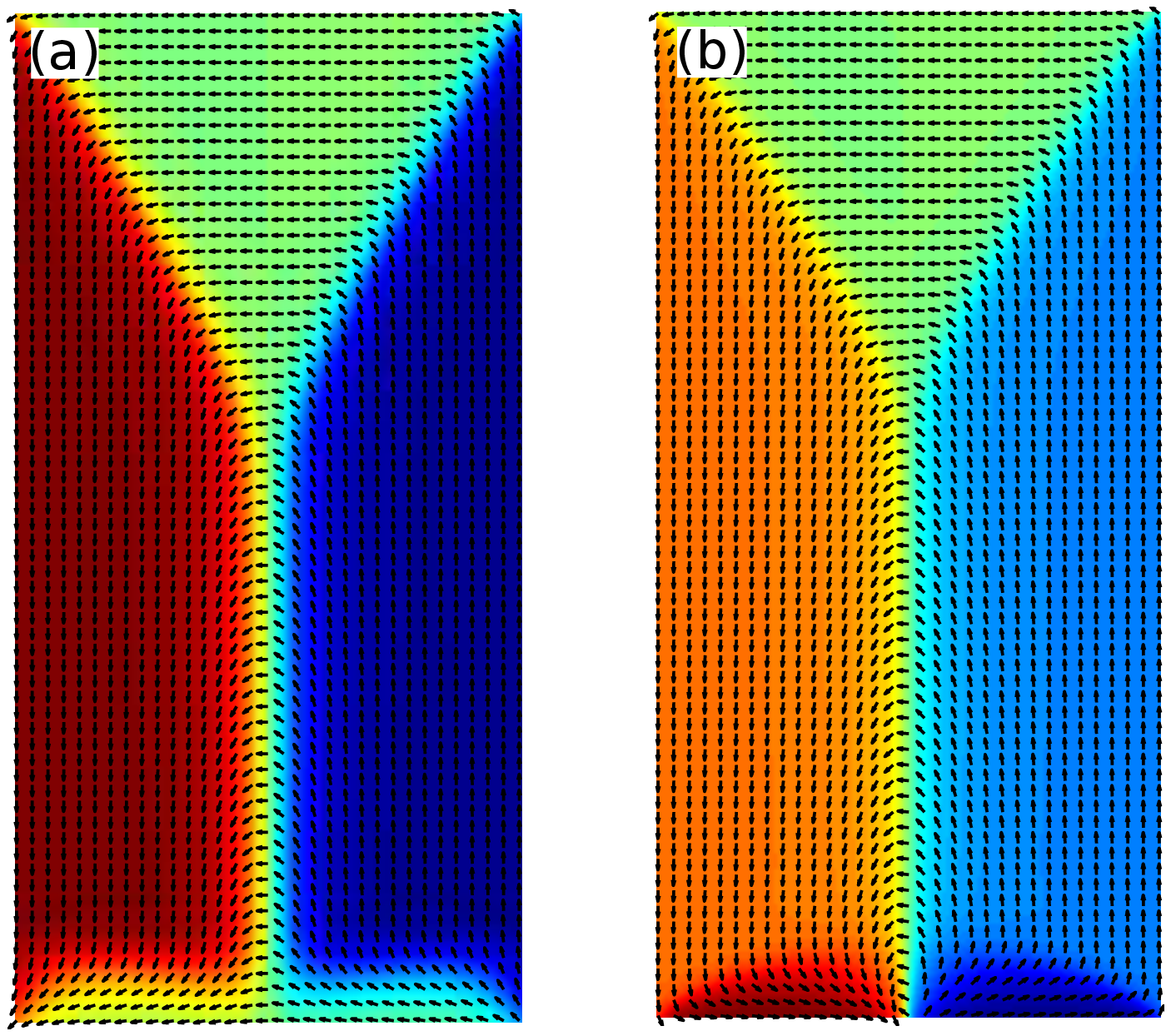}
\caption{(color online) Half-Landau states. Domain size in both panels is $L_x = 32, L_y=64$. In panel (a), $\nu=5$ and the configuration has boundary vortices in the lower corners. In panel (b), $\nu=10$ and we see a configuration with 2 boundary vortices as a bound pair in the center of the lower boundary.}
\label{nularge}
\end{center}
\end{figure}

It is simple to understand why the transition from an edge-domain
state to a half-Landau state is preferred as the relative strength of
the magnetostatic interaction increases, just by considering the top
halves of the figures. The half-Landau states are in principle charge
free (in the top half), while the edge-domains are not. Thus, while
sacrificing some exchange energy in order to form the domain walls,
the magnetostatic energy is reduced.

In principle as mentioned above, according to the theory of section 3,
each of the domain walls in the half-Landau state should be
individually charge-free. While this is true of the $180^\circ$-wall
in the center, it is not quite true of the $90^\circ$ walls
here. Indeed, their orientations are not quite at $45^\circ$ to the
easy axes, and so they are each slightly charged (in opposite
senses). Since the stray-field energy of such walls would diverge
logarithmically as the size of the domain increased, we would expect
that in the large-domain limit, they would converge to a $45^\circ$
orientation.

Let us now discuss the lower edges of the two half-Landau states
pictured in figure \ref{nularge}. Firstly, we note that in the model
we consider, the magnetization is prevented from forming the full
Landau state, since in this state there would necessarily be a
magnetic vortex included in the sample, and these are of infinite
energy in our 2D model. Indeed, admissible magnetization states in
this model must have topological degree zero (i.e. be continuously
deformable to the uniform state); a vortex has degree one, as does a
Landau state.

In panel (a) of figure \ref{nularge}, we have $\nu=5$. On the lower boundary, the magnetization is aligned with $\theta=\pi/2$---the same as the top boundary. There are boundary vortices (quarter vortices) of opposite charges in the lower corners, and what appear to be two oppositely charged $90^\circ$ ``boundary domain walls'' joining the bulk domains on the left and right to the boundary orientation. In panel (b), the magnetization on the lower boundary is instead aligned mostly with $\theta = -\pi/2$, antiparallel to the magnetization at top boundary, and performs a full $2\pi$ rotation in the center of the lower edge (i.e. there is a bound pair or boundary vortices there), such that the whole configuration has degree zero. Additionally, the transition layers at the lower boundary look closer in structure to edge domains than to 1D boundary walls.

In panel (a), the charge on the lower boundary is spread out across the whole edge while the exchange energy is confined close to the edge in the boundary walls. Conversely in panel (b), the charge on the boundary is focused at the bound pair of boundary vortices in the center, while the exchange energy is more spread out across the edge domains. For larger $\nu$, it thus appears energetically preferable to concentrate the charge in a smaller region.


\section{Conclusions and further work}

We have proven existence results concerning $90^\circ$ and $180^\circ$
domain walls, viewed as minimizers of the 1D domain wall energy, in
thin film ferromagnets with fourfold in-plane
anisotropy. Moreover we are able to learn a lot of information about
these structures, including strict monotonicity, smoothness, and, in
the case of $90^\circ$-walls, as well as uniqueness. Further
problems here include proving uniqueness (or not) of the
$180^\circ$-wall, and studying walls with more winding such as
$360^\circ$-walls, or materials with more exotic crystalline
anisotropy (see, e.g., \cite{Ognev}).  We also presented
numerically computed 1D wall profiles corresponding to our existence
results, and magnetization configurations for rectangular samples of
fourfold materials, which feature slightly charged walls in the bulk
and charged walls at the boundary. This investigation poses possible
questions about domain walls in regimes where the penalty for walls
having net charges is relaxed such that charged walls, both in the
bulk and at the boundary may be allowed in the large domain limit, and
it would be interesting to study the $\Gamma$-limit of such a 2D
thin film energy. Additionally, there is the question of existence of
`boundary walls' in the thin-film regime considered in this article:
this will be addressed elsewhere \cite{MuratovLund}.

\section*{Acknowledgements}

This work was supported by NSF via grant DMS-1313687.

~\\

\section*{References}

\end{document}